%%%%%%%%%%%%%%%%%%%%%%%%%%%%%%%%%%%%%%%%%%%%%%%%%%%%%%%%%%%%%%%%%%%%%%%%%%%%
%% Trim Size: 9.75in x 6.5in
%% Text Area: 8in (include Runningheads) x 5in
%% ws-jktr.tex   :   26-6-08
%% Tex file to use with ws-jktr.cls written in Latex2E.
%% The content, structure, format and layout of this style file is the
%% property of World Scientific Publishing Co. Pte. Ltd.
%% Copyright 1995, 2002 by World Scientific Publishing Co.
%% All rights are reserved.
%%%%%%%%%%%%%%%%%%%%%%%%%%%%%%%%%%%%%%%%%%%%%%%%%%%%%%%%%%%%%%%%%%%%%%%%%%%%
%
%%%%%%%%%%%%%%%%%%%%%%%%%%%%%%%%%%%%%%%%%%%%%%%%%%%%%%%%%%%%%%%%%%%%%%%%%%%%
%% Trim Size: 9.75in x 6.5in
%% Text Area: 8in (include Runningheads) x 5in
%% ws-jktr.tex   :   26-6-08
%% Tex file to use with ws-jktr.cls written in Latex2E.
%% The content, structure, format and layout of this style file is the
%% property of World Scientific Publishing Co. Pte. Ltd.
%% Copyright 1995, 2002 by World Scientific Publishing Co.
%% All rights are reserved.
%%%%%%%%%%%%%%%%%%%%%%%%%%%%%%%%%%%%%%%%%%%%%%%%%%%%%%%%%%%%%%%%%%%%%%%%%%%%
%
\documentclass{ws-jktr}

%%theorem enviroment--------------------------------------------

\usepackage{lipsum} %insert picture on same page as text (paragraph)
\usepackage{picins} %insert picture on same page as text (paragraph)
\usepackage{graphicx} %insert picture
\usepackage{color}
\usepackage{mathrsfs}  %出现数学希腊符号 %出现数学希腊符号$\mathscr{OS}$
\usepackage{picinpar}

\newcommand{\JW}[1]{f_{#1}}
\newcommand{\coeff}[2]{\coefficient_{\in \JW{#1}^{#2}}}
%#1是指第一个输入的参数, #2 是指第二个输入的参数;
\DeclareMathOperator*{\coefficient}{coeff}

%\newtheorem{thm}{Theorem}[section]
%\newtheorem{Lemma}[thm]{Lemma}
%\newtheorem{cor}[thm]{Corollary}
%\newtheorem{pro}[thm]{Proposition}
%\newtheorem{definition}[thm]{Definition}
%\newtheorem{claim}[thm]{Claim}
%\newtheorem{remark}[thm]{Remark}
%%mathe---------------------------------------------------------

%---------------------------begin document-------------------------------
\begin{document}

\title
{Projectors in the Virtual Temperley-Lieb Algebra}

\author{Qingying Deng$^{\dag}$, Xian'an Jin$^{\dag}$\thanks{Corresponding author}, Louis.H.Kauffman$^{\ddag}$}

\address{$^{\dag}$School of Mathematics and Computational Science, Xiangtan University, Xiangtan, Hunan 411105,
P. R. China \\
qingying@xtu.edu.cn(Qingying Deng)}

\address{$^{\dag}$School of Mathematical Sciences,\\ Xiamen University, Xiamen, Fujian 361005, \\P. R. China \\
xajin@xmu.edu.cn(Xian'an Jin)}

\address{$^{\ddag}$Department of Mathematics, Statistics, and Computer Science,\\ University of Illinois at Chicago,  Chicago\\USA \\
and\\
Department of Mechanics and Mathematics,\\
Novosibirsk State University, Novosibirsk\\Russia\\
kauffman@uic.edu(Louis H. Kauffman)}

\maketitle

\begin{abstract}
We present a method of defining projectors in the virtual Temperley-Lieb algebra, that generalizes the Jones-Wenzl projectors in Temperley-Lieb algebra.
We show that the projectors have similar properties with the Jones-Wenzl projectors, and contain an extra property which is associated with the virtual generator elements, that is, the product of a projector with a virtual generator is unchanged.
We also show the uniqueness of the projector $f_n$ in terms of its axiomatic properties in the virtual Temperley-Lieba algebra $VTL_n(d)$.
Finally, we find the coefficients of $f_n$ and give an explicit formula for the projector $f_n$.
\end{abstract}

$\mathbf{keywords:}$ virtual Temperley-Lieb algebra; projector; recurrence formula; coefficient.

\vskip0.5cm

\section{Introduction}
Wenzl \cite{Wenzl} and Jones \cite{Jones} had proposed the notion of projectors in Temperley-Lieb algebra.
Later, people called them \textit{Jones-Wenzl projectors}.
By using elementary linear algebra, Lickorish \cite{Lick} had independently proved some properties of Jones-Wenzl projectors, which are useful machineries for producing an invariant of 3-manifolds.
Kauffman and Lomonaco \cite{KL07} described the background for topological quantum computing in terms of Temperley-Lieb recoupling theory.
Kauffman and Lins discussed the Temperley-Lieb recoupling theory which is used to construct invariants of 3-manifolds in a monograph \cite{KauLins}.

In the literature, the virtual Temperley-Lieb ($VTL$) algebra has been implicitly argued \cite{DK05,KauKD}.
Dye and Kauffman \cite{DK05} had defined the Witten-Reshetikhin-Turaev invariant of virtual link diagrams, whose computation involves the $VTL$ algebra.
They tried to understand the structure of the projectors in the context of the $VTL$ algebra.
And they hoped that the results in $VTL$ algebra would help in understanding the extension of the Witten-Reshetikhin-Turaev invariant to virtual knot theory.
The $VTL$ algebra is an extension of the Temperly-Lieb ($TL$) algebra by
virtual elements, and it is isomorphic to the Brauer algebra \cite{Benkart,Brauer}.
For the diagram description of Brauer algebra, we refer the reader to \cite{Brauer,Liu}.
In $VTL$ algebra, we look at all connections using $n$ points on top and $n$ points on bottom with regard to two parallel rows of points in the plane.
We allow connections top to top, top to bottom, bottom to bottom.
Crossings of the connection arcs are all virtual crossings.
The $VTL$ algebra generated by extending $n$-tangle diagrams by including virtual
crossings is a diagrammatic version of the Brauer algebra.

The $VTL$ algebra was independently proposed from a pure algebraic viewpoint \cite{ZKW}.
As an extension of \cite{ZKW}, Zhang \emph{et al.} \cite{ZKG} studied the $VTL$ algebra such that the virtual braid group can be represented in the $VTL$ algebra,
and also discussed the relationship between the $VTL$ algebra and the Brauer algebra.
Mcphail-Snyder and Miller \cite{SnyderMiller} also wrote that Temperley-Lieb algebra leads to a ``virtual Temperley-Lieb category'' when we consider diagrams drawn on surfaces instead of plane.
Mcphail-Snyder and Miller \cite{SnyderMiller} also calculated out the formula of second projector in $VTL$, which be used to construct a polynomial invariant for graphs in surfaces.
It is well-known that every (oriented) virtual link can be represented by a virtual braid, whose closure is isotopic to the original link \cite{Kamada,KauLam,LLL,Manturov}.
Li \emph{et al.} \cite{LLL} studied the properties of the $VTL$ algebra and showed how the $f$-polynomial of a virtual knot can be derived from a representation of the virtual braid group into the $VTL$ algebra, which is an approach similar to Jones's original construction.
These are the applications of the $VTL$ algebra for the bracket polynomial and for representations of the virtual braid group.

Frenkel and Khovanov \cite{FK97} presented a method of calculating the coefficients appearing in the Jones-Wenzl projections in the $TL$ algebra. Morrison \cite{Mor} wrote a self-contained paper which also gave a method of calculating the coefficients.
We refer to more papers \cite{Ocn02,Rez02,Rez07} for interested reader.

In this paper, we review some basis concepts and give the definition of projectors of the virtual Temperley-Lieb algebra
%and give a direct diagrammatic method for the factorization of any connection element in virtual Temperley-Lieb algebra
in Section 1.
In Section 2, we construct the projectors of the virtual Temperley-Lieb algebra recursively and demonstrate that the uniqueness of the projectors that satisfies all axiomatic.
In Section 3, we simplify the recurrence formula for the projectors by introducing a special linear span of diagrams.
In Section 4, we deduce some results on the coefficients appearing in the projectors in the virtual Temperley-Lieb algebra.
In Section 5, we give an explicit formula for the projectors in the virtual Temperley-Lieb algebra.

\section{The projectors in virtual Temperley-Lieb algebra}
\noindent

%A \emph{virtual $n$-tangle diagram} is a decorated immersion of n copies of $[0,1]$ with classical and virtual crossings. The $2n$ endpoints of the diagram are arranged so that $n$ endpoints appear in a row at the top and $n$ endpoints form a lower row. Any two virtual $n$-tangle diagrams can be multiplied
%by attaching the lower $n$ endpoints of one $n$-tangle to the top $n$ endpoints of another $n$-tangle.

% We define an $n$-tangle to be \emph{elementary} if it contains no classical or virtual crossings. Note that the product of any two elementary tangles is elementary. The \emph{closure} of an $n$-tangle connects each upper endpoint with a corresponding lower endpoint.
We first recall some elementary algebra notions involved in this article.
An (unital) associative algebra $\mathcal{A}$ \cite{Assem} is an algebra structure with compatible operation of addition, multiplication (associative), and a scalar multiplication by elements in some field $K$.
The addition and multiplication operations together give $\mathcal{A}$ the structure of a ring (with an identity element).
The addition and scalar multiplication operations together give $\mathcal{A}$ the structure of a vector space over $K$.
A $K$-vector subspace $\mathcal{B}$ of a $K$-algebra $\mathcal{A}$ is a $K$-subalgebra of $\mathcal{A}$ if the identity of $\mathcal{A}$ belongs to $\mathcal{B}$ and $bb'\in \mathcal{B}$ for all $b, b'\in\mathcal{B}$.

We assume that the reader is familiar with Reidemeister moves, virtual Reidemeister moves, virtual isotopy etc \cite{VKT}. The \emph{$n$-strand virtual Temperley-Lieb algebra} $VTL_{n}(d)$ is an associated algebra (with unit) over $\mathbb{C}(d)$ ($d$ is an indeterminate), the field of rational functions (field of fractions). Every element of this algebra is a linear combination of some 1-dimensional sub-manifolds (connection elements).
Each sub-manifold intersects the top and bottom of the rectangle in $n$ points.
If two sub-manifolds are virtual isotopic, keeping the boundary fixed, then we call them \emph{equivalent}.
Deleting a simple closed curve is equivalent to generating a multiple of $d$.
The product between two connection elements is represented by a vertical stack of rectangles attaching the bottom of one rectangle to the top of the other.
That is, the diagrammatic formulation of $VTL_n(d)$ is equivalent to Brauer algebra \cite{Benkart}.
The dimensions of $VTL_{n}(d)$ is $(2n-1)!!$.

Let $G_{n}=\{1_{n},e_1,e_2,...,e_{n-1},v_1,v_2,...,v_{n-1}\}$ be a set of $VTL_{n}(d)$ as illustrated in Figure \ref{F:VTLAgenerators}. In \cite{Cohen,Liu}, the authors usually call $v_{i}s$ Coxeter generators, and $e_{i}s$ Temperley-Lieb generators. In this article, we call $v_{i}s$ and $e_{i}s$ generators for simplicity.
%$G_n$ is called as the generator set of $VTL_{n}(d)$ in \cite{LLL}.
\begin{figure}[!htbp]
\centering
\includegraphics[width=12cm]{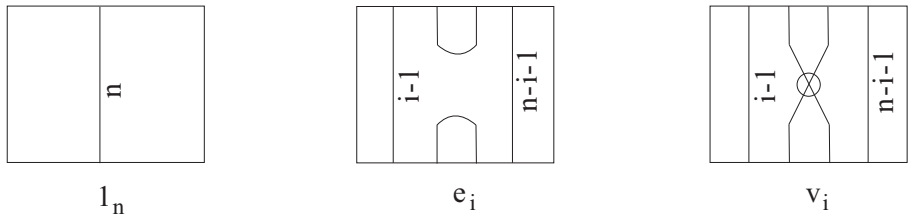}
 \caption{A set $G_{n}$ of $VTL_{n}(d)$.}\label{F:VTLAgenerators}
\end{figure}

For any $i,j=1,2,...,n-1$, the elements of $G_{n}$ satisfy the following relations:
\begin{equation}\label{vrelation}
\begin{split}
  &1_{n}e_i=e_i=e_i1_{n} \\
    &e_i^2=d e_i\\
    &e_ie_j=e_je_i \ \ if  \ |i-j|\geq 2\\
       &e_ie_{i\pm1}e_i=e_{i}\\
       &e_iv_{i\pm1}e_i=e_{i}\\
       &v_i^2=1_n\\
        &v_iv_j=v_jv_i \ \ if  \ |i-j|\geq 2\\
        &v_iv_{i+1}v_i=v_{i+1}v_iv_{i+1} \\
    &e_{i}v_j=v_je_{i} \ \ if \ |i-j|\geq 2\\
    &v_ie_{i+1}v_i=v_{i+1}e_iv_{i+1} \\
    &e_{i}v_i=v_ie_{i}=e_i\\
  \end{split}
\end{equation}

Note that we add a new formula $e_iv_{i\pm1}e_i=e_{i}$ which is not contained in \cite{LLL} in above relations such that the first pure virtual Reidemeister move appears in the diagrammatic representation.
But this formula is hold in \cite{Cohen,Liu}.
As far as the author is aware, the only complete proof of the diagrammatic formulation of $TL_n(d)$ is equivalent to the pure algebraic formulation of $TL_n(d)$ is the one given by Borisavljevi\'{c}, Do\v{s}en and Petri\'{c} in \cite{Borisavljevic}.
It is obvious that the diagrammatic formulation of $VTL_n(d)$ (which is equivalent to Brauer algebra) is equivalent to the pure algebraic formulation of $VTL_n(d)$.

In particular, the Temperley-Lieb algebra $TL_n(d)$ is a sub-algebra of $VTL_{n}(d)$, which does not involve $\{v_1,v_2,...,v_{n-1}\}$.
The dimensions of $TL_{n}(d)$ is $C_n=\frac{1}{n+1}\left(
                                                                           \begin{array}{c}
                                                                             2n \\
                                                                             n \\
                                                                           \end{array}
                                                                         \right)$
.

In Chapter 6 of \cite{Menasco}, Kauffman gave a direct diagrammatic method for expressing any connection element (1-dimensional sub-manifold) of $TL_{n}(d)$ as a canonical product of elements of $\{1_{n},e_1,e_2,...,e_{n-1}\}$.
Similarly, each connection element (1-dimensional sub-manifold) in $VTL_{n}(d)$ can be expressed as the product of elements of $G_n$.
Note that the expression is not unique.

Following Wenzl \cite{Wenzl} and Jones \cite{Jones} and guided also by Lickorish \cite{Lick} and \cite{KauLins}, consider for each $i\geq 1$, the polynomial function $\Delta_i$ {of degree} $i$ in $d$ defined recursively by
\begin{eqnarray}\label{E:Delta}
 \Delta_{i}&=& d \Delta_{i-1}-\Delta_{i-2}, \Delta_0=1~ and~ \Delta_{-1}=0.
\end{eqnarray}
This $\Delta_i$ is, in fact, the $i^{th}$ renormalised Chebyshev polynomial of the second kind.
For each $i\in \mathbb{N}$, provided $d$ is chosen in $\mathbb{C}$ so that $\Delta_i\neq 0.$

Denote Jones-Wenzl projectors of $TL_{n}(d)$ by $P_i$ ($i\leq n$). Next we shall review the combinatorial definition and the properties of $P_i$ as follows.

\begin{definition}\cite{Wenzl,Jones,Lick,KauLins}
Let $P_i\in TL_{n}(d)$ be defined inductively for $i\leq n$ by the following formulas:
\begin{eqnarray}
 P_1&=&1_{n}\\
 P_{i+1}&=& P_{i}-\frac{\Delta_{i-1}}{\Delta_{i}}P_{i}e_{i}P_{i}, ~provided~ i+1\leq n.
\end{eqnarray}
where the polynomial function $\Delta_i$ {of degree} $i$ in $d$ defined recursively by Eq. (\ref{E:Delta}).
\end{definition}

\begin{lemma}\cite{Wenzl,Jones,Lick,KauLins}
The elements $P_i\in TL_{n}(d)$ for $i\leq n$ enjoy the following properties.
\begin{enumerate}
 \item [(i)] $P_i^2=P_i.$
 \item [(ii)] $P_{i}e_k=e_kP_{i}$ for $k\geq i.$
\end{enumerate}
\end{lemma}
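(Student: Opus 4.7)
My plan is to prove (i) and (ii) simultaneously by induction on $i$, while carrying along an auxiliary ``bubble collapse'' identity
\[
e_{i}\,P_{i}\,e_{i} \;=\; \frac{\Delta_{i}}{\Delta_{i-1}}\,e_{i}\,P_{i-1}.
\]
Combined with the absorption $P_{i-1}P_{i}=P_{i}P_{i-1}=P_{i}$ (immediate from $P_{i-1}^{2}=P_{i-1}$ and the recursion defining $P_{i}$), this auxiliary identity yields the form actually needed for idempotence, namely $P_{i}e_{i}P_{i}e_{i}P_{i}=(\Delta_{i}/\Delta_{i-1})\,P_{i}e_{i}P_{i}$. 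The base case $i=1$ is immediate since $P_{1}=1_{n}$.

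The commutation claim (ii) is the easy half. Since $P_{i}$ is a polynomial in $e_{1},\dots,e_{i-1}$, and each such $e_{j}$ commutes with $e_{k}$ whenever $|k-j|\ge 2$, the generator $e_{k}$ commutes with $P_{i}$ as soon as $k\ge i+1$, which is the natural range in which (ii) is meant to hold.

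The heart of the argument is verifying the auxiliary identity at stage $i+1$. Expanding $e_{i+1}P_{i+1}e_{i+1}$ via the recursion splits it into two pieces. In the first, $e_{i+1}P_{i}e_{i+1}$ collapses to $d\,e_{i+1}P_{i}$ after sliding $e_{i+1}$ through $P_{i}$ by (ii) and invoking $e_{i+1}^{2}=d\,e_{i+1}$. In the second, $e_{i+1}P_{i}e_{i}P_{i}e_{i+1}$ is simplified by sliding both outer copies of $e_{i+1}$ through $P_{i}$, applying the Temperley-Lieb relation $e_{i+1}e_{i}e_{i+1}=e_{i+1}$ in the middle, and then using $P_{i}^{2}=P_{i}$ to reduce it to $e_{i+1}P_{i}$. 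Combining the two pieces and invoking the Chebyshev recursion $d\Delta_{i}-\Delta_{i-1}=\Delta_{i+1}$ gives the auxiliary identity at stage $i+1$.

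With this in hand, idempotence is mechanical. Expanding $P_{i+1}^{2}$ with the help of $P_{i}^{2}=P_{i}$ produces
\[
P_{i}-2\,\frac{\Delta_{i-1}}{\Delta_{i}}\,P_{i}e_{i}P_{i}+\frac{\Delta_{i-1}^{2}}{\Delta_{i}^{2}}\,P_{i}e_{i}P_{i}e_{i}P_{i},
\]
and the stage-$i$ form of the auxiliary identity collapses the quadratic term to $(\Delta_{i}/\Delta_{i-1})\,P_{i}e_{i}P_{i}$; the three coefficients then telescope to $-\Delta_{i-1}/\Delta_{i}$, so $P_{i+1}^{2}=P_{i+1}$. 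The main obstacle is bookkeeping: the three running assertions must be interleaved in the right order, since (i) at stage $i+1$ needs the stage-$i$ auxiliary identity, while the auxiliary identity at stage $i+1$ in turn needs both (i) and (ii) at stage $i$, so the real content of the argument is the careful synchronization of the induction.
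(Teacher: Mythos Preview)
The paper does not supply its own proof of this lemma: it is quoted verbatim from the cited references \cite{Wenzl,Jones,Lick,KauLins} as background, so there is no in-paper argument to compare against directly. Your proof is the standard one and is correct. Your parenthetical remark that the commutation in (ii) is really meant for $k\ge i+1$ is also correct; as stated with $k\ge i$ the claim is false (already $P_{2}e_{2}\neq e_{2}P_{2}$ in $TL_{3}(d)$), and the paper itself uses only the $k\ge i+1$ version later on.

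It is worth noting that your inductive package is exactly the classical template that the paper then generalizes in its Lemma~\ref{new} for the virtual case: your auxiliary ``bubble collapse'' identity $e_{i}P_{i}e_{i}=(\Delta_{i}/\Delta_{i-1})\,e_{i}P_{i-1}$ is the $TL$ specialization of the paper's property $(5_i)$, $(e_{i}f_{i})^{2}=\alpha_{i-1}e_{i}f_{i}$, and your absorption $P_{i-1}P_{i}=P_{i}$ corresponds to $(2_i)$ there. The virtual proof simply has to carry more auxiliary identities (their $(6_i)$ and $(7_i)$) because of the extra $v$-generators, but the skeleton of the induction is the same as yours.
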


\begin{proposition}\cite{Wenzl,Jones,Lick,KauLins}
There is a unique non-zero element $P\in TL_{n}(d)$ such that
\begin{enumerate}
 \item [(i)] $P^2=P.$
 \item [(ii)] $Pe_{k}=0,$ for $k\leq n-1.$
\end{enumerate}
\end{proposition}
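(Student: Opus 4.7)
The plan is to prove existence using the recursively-defined Jones-Wenzl projector $P_n$, then deduce uniqueness by comparing any candidate $Q$ against $P_n$. Existence is essentially the content of the preceding Lemma together with an induction on $n$: the recursion $P_n = P_{n-1} - \frac{\Delta_{n-2}}{\Delta_{n-1}} P_{n-1} e_{n-1} P_{n-1}$ yields $P_n e_k = 0$ for $k \leq n-2$ from the inductive hypothesis applied to both summands, and for $k = n-1$ via the standard partial trace identity for $e_{n-1} P_{n-1} e_{n-1}$.

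For uniqueness, I would first normalize: any non-zero $P$ satisfying (i) and (ii) must have coefficient $1$ on $1_n$ in its diagram-basis expansion. Writing $P = a\cdot 1_n + \sum_j b_j X_j$ with each $X_j$ a non-identity basis diagram, and invoking the standard fact (noted in the preceding discussion on the canonical product expression) that each such $X_j$ can be written as a product $e_{i_1}\cdots e_{i_m}$ of Temperley-Lieb generators, we have $P X_j = (P e_{i_1}) e_{i_2}\cdots e_{i_m} = 0$. Hence $P^2 = aP$, and $P^2 = P \neq 0$ forces $a = 1$. I would also establish the left-sided annihilation $e_k P_n = 0$ for the specific $P_n$ from the recursion: this follows from $*$-invariance under the anti-involution of $TL_n(d)$ that reflects diagrams vertically, fixes each $e_i$, and satisfies $(xy)^* = y^* x^*$. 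The recursion preserves $*$-symmetry starting from $P_1 = 1_n$, so $P_n^* = P_n$, and applying $*$ to $P_n e_k = 0$ yields $e_k P_n = 0$.

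Given any non-zero $Q$ satisfying (i) and (ii), I would compute $Q P_n$ in two ways. Expanding $P_n = 1_n + \sum_j b_j X_j$ and using $Q X_j = 0$, we obtain $Q P_n = Q$. Expanding $Q = 1_n + \sum_j b'_j X'_j$ and using $X'_j P_n = 0$ (peel off the rightmost generator in each $X'_j$ and invoke $e_i P_n = 0$), we obtain $Q P_n = P_n$. Hence $Q = P_n$, establishing uniqueness. The principal subtlety is the left-sided annihilation $e_k P_n = 0$: it is not listed among the defining axioms of the Proposition and must be extracted from the recursive construction via the $*$-involution, rather than being derivable from (i) and (ii) alone.
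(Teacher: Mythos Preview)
Your proof is correct. Note, though, that the paper does not supply its own proof of this classical proposition; it is quoted from the cited references as background. The closest in-paper comparison is the proof of the analogous uniqueness statement for $VTL_n(d)$ (the proposition asserting a unique nonzero $f$ with $f^2=f$, $fe_i=e_if=0$, and $fv_i=v_if=f$).

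Your argument and the paper's $VTL$ proof share the same skeleton: both first pin down the identity-diagram coefficient via idempotence and the annihilation/absorption conditions, and both establish uniqueness by expanding the product of two candidates in two ways and showing it equals each factor. The chief difference is that the $VTL$ statement builds two-sided annihilation $fe_i=e_if=0$ into its hypotheses, so the paper never needs to pass from one-sided to two-sided vanishing. In the $TL_n$ statement only $Pe_k=0$ is assumed, and you correctly flag this asymmetry and resolve it by invoking the vertical-flip anti-involution to obtain $e_kP_n=0$ for the specific recursively constructed $P_n$; your uniqueness step then only requires left-annihilation for this one distinguished element, not for an arbitrary candidate $Q$. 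That is the standard move and a genuine subtlety that the one-sided axioms alone do not supply.
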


\begin{remark}
Note that in $TL_{n}(d)$ there are $n$ projectors satisfying $P_i^2=P_i$ for $i=1,2,...,n$. But only one projector $P_n$ satisfies $P^2=P$ and $Pe_{k}=0$ for $k\leq n-1.$
And Jones-Wenzl projector $P_2$ in $TL_2(d)$ can be used to construct a mapping between chromatic algebra and Temperley-Lieb algebra \cite{Krushkal}.
The trace radial $P_4$ (when $d=\frac{\sqrt{5}+1}{2}$, i.e., golden ratio) in $TL_4(d)$ can be used to prove a famous Tutte's formula (``golden identity") on chromatic polynomial. We refer to \cite{Krushkal} and references in it.
\end{remark}

Based on the definition and properties of Jones-Wenzl projectors, we shall generalize them to projectors of virtual Temperley-Lieb algebras.
First, we define two 1-variable functions $x_i$ and $z_i$ in $i$ as follows:
 $$x_i=\frac{1}{i+1},~z_i=\frac{i}{i+1}.$$
A 2-variable functions $y_i$ in $d,i$ is defined by
 $$y_i=-\frac{2i}{(i+1)(d+2i-2)}.$$
Let $\alpha_{i-1}=x_{i-1}d+y_{i-1}+z_{i-1}=\frac{(d+i-3)(d+2i-2)}{i(d+2i-4)}$.
Then
\begin{eqnarray}\label{key}
\frac{x_i+y_i\alpha_{i-1}}{-z_i}=\frac{d-2}{i(d+2i-4)}=x_{i-1}+y_{i-1}.
\end{eqnarray}
For example, $(x_0,y_0,z_0)=(1,0,0)$, $(x_1,y_1,z_1)=(\frac{1}{2},-\frac{1}{d},\frac{1}{2})$. $\alpha_0=d$ and $\alpha_1=\frac{d^2+d-2}{2d}$.

Next, we shall recursively construct the \textit{projector} $f_i$ of $VTL_{n}(d)$ as follows.

\begin{definition}\label{D:virtualpj}
Let $f_i\in VTL_{n}(d)$ be defined inductively for $i\leq n$ by the following formulas:
\begin{eqnarray}
 f_1&=&1_{n}\\
 f_{i}&=&x_{i-1}f_{i-1}+y_{i-1}f_{i-1}e_{i-1}f_{i-1}+z_{i-1}f_{i-1}v_{i-1}f_{i-1},~ provided~ i\leq n.
\end{eqnarray}
where the denominator of $y_{i-1}$ is non-zero, that is, $d\neq\{0,-2,-4,...,-2n+4\}$ (i.e., non-positive even number).
\end{definition}
For $f_{i-1}, f_{i}\in VTL_{n}(d)$, we can get the recursive relation between them as shown in Figure \ref{F:fn1virtualprojector}.
For example: In $VTL_2$, $f_2=\frac{1}{2}f_1-\frac{1}{d}f_1e_1f_1+\frac{1}{2}f_1v_1f_1
=\frac{1}{2}1_2-\frac{1}{d}e_1+\frac{1}{2}v_1$.
In particular, $f_i\in VTL_{n}(d)$ commutes with each $e_{i+k}, v_{i+k}$ for $k\geq 1,$ which will be used frequently later.

\begin{figure}[pb]
\centering
\includegraphics[width=5.5in]{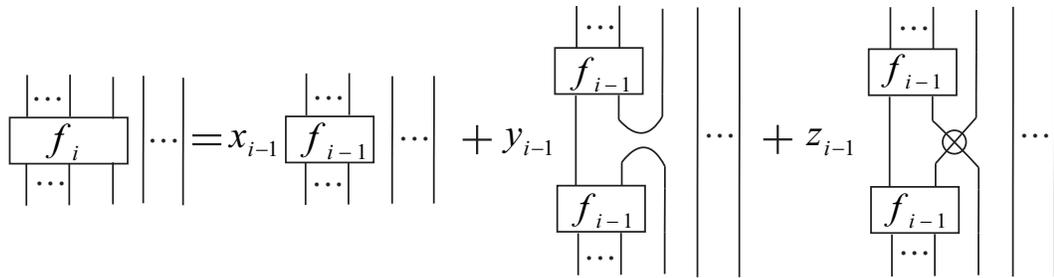}
\caption{The recursive relation between $f_{i}$ and $f_{i-1}$ in $VTL_{n}(d)$.}\label{F:fn1virtualprojector}
 \end{figure}

It is easy to get the following identities, which will be used frequently later.
\begin{itemize}
 \item [$(i)$] $z_{i}x_{i-1}=x_i$,
 \item [$(ii)$] $(y_{i}+z_i)y_{i-1}=y_{i}z_{i-1}$,
 \item [$(iii)$] $e_{i}v_{i\pm1}e_i=e_{i}$, $e_{i}v_{i\pm1}v_i=e_{i}e_{i\pm1}$, $v_i v_{i\pm1}e_{i}=e_{i\pm1}e _{i}$,
  $e_{i}e_{i\pm1}v_i=e_{i}v_{i\pm1}$ and $v_{i}e_{i\pm1}e_i=v_{i\pm1}e_{i}.$
\end{itemize}

%When the context is clear, $f_{i-1}1_n$ or $1_n f_{i-1}$ are abbreviated as $f_{i-1}.$
By the Definition \ref{D:virtualpj}, we can get the following Lemma,
which is one of main results in this article.

\begin{lemma}\label{new}
The projectors $f_i\in VTL_{n}(d)$ ($i\leq n$) enjoy the following properties.
\begin{itemize}
 \item [$(1_i)$] $f_i^2=f_i.$
 \item [$(2_i)$] $f_{i+1}f_i=f_if_{i+1}=f_{i+1}$, provided $i+1\leq n.$
 \item [$(3_i)$] $f_ie_k=e_kf_i=0$ for $k<i.$
 \item [$(4_i)$] $f_iv_k=v_kf_i=f_i$ for $k<i.$
 \item [$(5_i)$] $(e_{i}f_i)^2=\alpha_{i-1}e_if_i$ and $(f_{i}e_i)^2=\alpha_{i-1}f_ie_i.$
 \item [$(6_i)$] $f_iv_if_iv_if_i=x_{i-1}f_i+y_{i-1}f_ie_if_i+z_{i-1}f_iv_if_i.$
 \item [$(7_i)$] $e_if_iv_if_{i}=(x_{i-1}+y_{i-1})e_if_{i}$ and $f_{i}v_if_ie_i=(x_{i-1}+y_{i-1})f_{i}e_i.$
\end{itemize}
\end{lemma}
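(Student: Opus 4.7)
The plan is simultaneous strong induction on $i$. In the base case $i = 1$, the element $f_1 = 1_n$ makes $(1_1)$--$(4_1)$ immediate (with $(3_1)$ and $(4_1)$ vacuous), and reduces $(5_1)$--$(7_1)$ to the relations $e_1^2 = d e_1$, $v_1^2 = 1_n$, and $e_1 v_1 = v_1 e_1 = e_1$, matching the specialisations $\alpha_0 = d$, $x_0 = 1$, $y_0 = z_0 = 0$.

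For the inductive step I would establish the seven properties of $f_{i+1}$ in the order $(2_{i+1})$, $(3_{i+1})$, $(1_{i+1})$, $(5_{i+1})$, $(7_{i+1})$, $(4_{i+1})$, $(6_{i+1})$, so that each uses only the preceding ones. Since every term of the recursion
$$
f_{i+1} = x_i f_i + y_i f_i e_i f_i + z_i f_i v_i f_i
$$
starts and ends with $f_i$, property $(2_{i+1})$ is immediate from $f_i^2 = f_i$, and for $k < i$ the claim $f_{i+1} e_k = 0$ in $(3_{i+1})$ reduces to $(3_i)$. The decisive case $k = i$ of $(3_{i+1})$ is
$$
f_{i+1} e_i = \bigl[x_i + y_i \alpha_{i-1} + z_i (x_{i-1} + y_{i-1})\bigr] f_i e_i,
$$
obtained by applying $(5_i)$ to the subword $f_i e_i f_i e_i$ and $(7_i)$ to $f_i v_i f_i e_i$; the bracket vanishes by the key identity stated just before the definition. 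The left-sided statements follow from the diagram-flipping anti-homomorphism $*$ that fixes each generator $e_j, v_j$ and, by the inductive hypothesis, each $f_j$.

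Property $(1_{i+1})$ then follows by expanding $f_{i+1}^2$ through the recursion, collapsing adjacent $f_i$ via $(2_{i+1})$, reducing $(f_i e_i)^2$ via $(5_i)$, and simplifying the mixed $e_i/v_i$ cross terms via $(7_i)$ and its $*$-dual; the resulting coefficients agree with those of the recursion after one invokes the scalar identities $z_i x_{i-1} = x_i$, $(y_i + z_i) y_{i-1} = y_i z_{i-1}$, and $x_{i-1} + z_{i-1} = 1$. The identities $(5_{i+1})$ and $(7_{i+1})$ are analogous direct expansions using the recursion together with the commutation of $e_{i+1}, v_{i+1}$ with every generator of index at most $i - 1$ appearing in $f_i$.

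The technical obstacle is $(4_{i+1})$ at $k = i$, the identity $f_{i+1} v_i = f_{i+1}$. I would first establish the weaker statement $f_{i+1} v_i f_i = f_{i+1}$ by direct expansion, using $(6_i)$ on $f_i v_i f_i v_i f_i$ and the $*$-dual of $(7_i)$ on $e_i f_i v_i f_i$, with coefficient matching again governed by the three scalar identities above. Setting $g = f_{i+1} v_i - f_{i+1}$, one then checks $g f_i = 0$ (from the preceding step), $g v_i = -g$, and $g e_k = 0$ for every $k \leq i$, using $(3_{i+1})$, the relation $v_i e_i = e_i$, the braid-type identity $v_i e_{i-1} v_i = v_{i-1} e_i v_{i-1}$, and the already-established case $k \leq i - 1$ of $(4_{i+1})$; these combined constraints force $g = 0$. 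Once $(4_{i+1})$ is in hand, property $(6_{i+1})$ is a short calculation combining the recursion, $(4_{i+1})$, and the scalar identities. I expect the passage from $f_{i+1} v_i f_i = f_{i+1}$ to $f_{i+1} v_i = f_{i+1}$ to be where the technical weight of the argument lies.
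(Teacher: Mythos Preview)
Your inductive scheme is the same as the paper's, and most of your reductions are correct. There are, however, two genuine problems.

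First, a minor ordering issue and an omission. In your expansion of $f_{i+1}^2$ the term $z_i^2\,f_iv_if_iv_if_i$ appears, and this is simplified using $(6_i)$, not only $(5_i)$ and $(7_i)$; you should list $(6_i)$ among the tools for $(1_{i+1})$. More importantly, $(7_{i+1})$ cannot precede $(4_{i+1})$. If you expand the middle $f_{i+1}$ in $e_{i+1}f_{i+1}v_{i+1}f_{i+1}$ and push $e_{i+1},v_{i+1}$ through $f_i$, you obtain
\[
x_i\,e_{i+1}f_{i+1}+y_i\,e_{i+1}v_if_{i+1}+z_i\,e_{i+1}e_if_{i+1},
\]
and the middle term $e_{i+1}v_if_{i+1}$ can only be reduced to $e_{i+1}f_{i+1}$ once $v_if_{i+1}=f_{i+1}$ is known. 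The paper accordingly proves $(4_{i+1})$ before $(5_{i+1})$, $(6_{i+1})$, $(7_{i+1})$.

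Second, and this is the real gap, your passage from $f_{i+1}v_if_i=f_{i+1}$ to $f_{i+1}v_i=f_{i+1}$ does not go through. The constraints you list for $g=f_{i+1}(v_i-1)$, namely $gf_i=0$, $gv_i=-g$, and $ge_k=0$ for $k\le i$, do \emph{not} force $g=0$. Indeed, setting $h_{i+1}=f_{i+1}$ and $h_j=f_{i+1}v_iv_{i-1}\cdots v_j$ for $j\le i$, one checks from the relations (using only $(3_{i+1})$ and the $k<i$ case of $(4_{i+1})$) that each $h_j$ is annihilated on the right by every $e_k$ and that the right $v_k$'s act on $\{h_1,\dots,h_{i+1}\}$ as the standard transpositions. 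The right action of $VTL_{i+1}$ on $\mathrm{span}\{h_j\}$ therefore factors through $\mathbb{C}[S_{i+1}]$, and your constraint $gf_i=0$ becomes $(h_i-h_{i+1})\cdot\frac{1}{i!}\sum_{\sigma\in S_i}\sigma=0$, i.e.\ $\sum_{j=1}^{i}h_j=i\,h_{i+1}$. This is a single linear relation among the $h_j$; it does not imply $h_i=h_{i+1}$ unless you already know all $h_j$ coincide, which is precisely the statement $(4_{i+1})$ you are trying to prove. The other constraints $gv_i=-g$ and $ge_k=0$ hold automatically for $g=h_i-h_{i+1}$ and add nothing.

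The paper avoids this by computing $f_{i+1}v_i-f_{i+1}$ directly, but with a \emph{double} expansion: write $f_{i+1}$ via the recursion, and then in each term expand the rightmost $f_i$ once more as $x_{i-1}f_{i-1}+y_{i-1}f_{i-1}e_{i-1}f_{i-1}+z_{i-1}f_{i-1}v_{i-1}f_{i-1}$, commute $e_i,v_i$ past $f_{i-1}$, and use the relations $(iii)$ together with the scalar identities $(i),(ii)$. After all cancellations one finds
\[
f_{i+1}v_i-f_{i+1}=y_{i-1}z_i\bigl[f_ie_ie_{i-1}f_{i-1}-f_iv_ie_{i-1}f_{i-1}\bigr].
\]
The bracket vanishes because, upon right-multiplying by $v_i$ and using $e_ie_{i-1}v_i=e_iv_{i-1}$ and $v_ie_{i-1}v_i=v_{i-1}e_iv_{i-1}$ together with $(4_i)$, both terms become $f_ie_iv_{i-1}f_{i-1}$; since $v_i$ is invertible this gives the bracket itself equal to zero. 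This direct reduction to an expression in $f_i,f_{i-1},e_i,e_{i-1},v_i,v_{i-1}$ is the missing idea in your sketch.
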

\begin{proof}
The Lemma is clearly true when $i=1$ for $1_i,2_i,5_i,6_i,7_i$ and $i=2$ for $3_i,4_i$.
Inductively suppose it is true for a given $i$,
and then suppose that $i+1\leq n$ and $d\neq\{0,-2,-4,...,-2n+4\}.$
This means that $f_1,f_2,...,f_{i+1}$ are all well defined.

$(1_{i+1})$
\begin{eqnarray*}
f_{i+1}^2&=&(x_{i}f_{i}+y_{i}f_{i}e_{i}f_{i}+z_{i}f_{i}v_{i}f_{i})^2\\
&=&x_{i}^{2}f_{i}+y_{i}^{2}f_{i}e_{i}f_{i}e_{i}f_{i}+z_{i}^{2}f_{i}v_{i}f_{i}v_{i}f_{i}+2x_{i}y_{i}f_{i}e_{i}f_{i}+2x_{i}z_{i}f_{i}v_{i}f_{i}\\
& &+y_{i}z_{i}f_{i}e_{i}f_{i}v_{i}f_{i}+z_{i}y_{i}f_{i}v_{i}f_{i}e_{i}f_{i} \ \ \ (by \  \ 1_i)\\
&=&x_{i}^{2}f_{i}+y_{i}^{2}\alpha_{i-1}f_{i}e_{i}f_{i}+z_{i}^{2}(x_{i-1}f_i+y_{i-1}f_ie_if_i+z_{i-1}f_iv_if_i)\\
& &+2x_{i}y_{i}f_{i}e_{i}f_{i}+2x_{i}z_{i}f_{i}v_{i}f_{i}+2y_{i}z_{i}(x_{i-1}+y_{i-1})f_ie_if_i \ \ \ (by \ \ 5_i, \ 6_i, \  7_i)\\
&=&(x_i^2+x_{i-1}z_{i}^{2})f_i+(y_{i}^{2}\alpha_{i-1}+z_i^2y_{i-1}+2x_{i}y_{i}+2y_{i}z_{i}(x_{i-1}+y_{i-1}))f_ie_if_i\\ & & +(z_{i-1}z_i^2+2x_iz_i)f_{i}v_{i}f_{i}
\end{eqnarray*}
\begin{eqnarray*}
&\because&x_i^2+x_{i-1}z_{i}^{2}=\frac{1}{(i+1)^2}+\frac{1}{i}\frac{i^2}{(i+1)^2}=\frac{1}{i+1}=x_{i}\\
& &y_{i}^{2}\alpha_{i-1}+z_i^2y_{i-1}+2x_{i}y_{i}+2y_{i}z_{i}(x_{i-1}+y_{i-1})\\
& &=z_i^2y_{i-1}-y_{i}^2\alpha_{i-1}\ \ \ (by \ \ (\ref{key}))\\
& &=\frac{i^2}{(i+1)^2}\frac{-2(i-1)}{i(d+2i-4)}
-(\frac{-2i}{(i+1)(d+2i-2)})^2\frac{(d+i-3)(d+2i-2)}{i(d+2i-4)}\\
& &=-\frac{2i}{(i+1)(d+2i-2)}\\
& &=y_i\\
& &z_{i-1}z_i^2+2x_iz_i=z_i(z_{i-1}z_i+2x_i)=z_i\\
&\therefore&f_{i+1}^2=x_{i}f_{i}+y_{i}f_{i}e_{i}f_{i}+z_{i}f_{i}v_{i}f_{i}=f_{i+1}
\end{eqnarray*}
$(2_{i+1})$\ \ Provided $i+2\leq n,$ then
\begin{eqnarray*}
&f_{i+2}f_{i+1}&=x_{i+1}f_{i+1}^2+y_{i+1}f_{i+1}e_{i+1}f_{i+1}^2+z_{i+1}f_{i+1}v_{i+1}f_{i+1}^2=f_{i+2}\\
&f_{i+1}f_{i+2}&=x_{i+1}f_{i+1}^2+y_{i+1}f_{i+1}^2e_{i+1}f_{i+1}+z_{i+1}f_{i+1}^2v_{i+1}f_{i+1}=f_{i+2}
\end{eqnarray*}
$(3_{i+1})$\ \ If $k<i,$ then
\begin{eqnarray*}
e_k f_{i+1}&=&e_k(x_{i}f_{i}+y_{i}f_{i}e_{i}f_{i}+z_{i}f_{i}v_{i}f_{i})=0 \ \  (by \ \  3_i),\\
f_{i+1}e_k&= &(x_{i}f_{i}+y_{i}f_{i}e_{i}f_{i}+z_{i}f_{i}v_{i}f_{i})e_k=0 \ \  (by \ \  3_i).\\
e_i f_{i+1}&=&e_i(x_{i}f_{i}+y_{i}f_{i}e_{i}f_{i}+z_{i}f_{i}v_{i}f_{i})\\
 &=&x_{i}e_i f_{i}+y_{i}\alpha_{i-1}e_{i}f_{i}+z_{i}(x_{i-1}+y_{i-1})e_{i}f_i \ \  (by  \ \ 5_i, \ \ 7_i, (\ref{key}))\\
  &=&0\\
f_{i+1}e_i&=&(x_{i}f_{i}+y_{i}f_{i}e_{i}f_{i}+z_{i}f_{i}v_{i}f_{i})e_i\\
  &=&x_{i}f_{i}e_i+y_{i}\alpha_{i-1}f_{i}e_{i}+z_{i}(x_{i-1}+y_{i-1})f_i e_{i} \ \  (by  \ \ 5_i,  \ \ 7_i, (\ref{key}))\\
  &=&0
\end{eqnarray*}

$(4_{i+1})$\ \ It is easy to prove the case $k<i$. In the following we consider the case $k=i$.
\begin{eqnarray*}
f_{i+1}v_i&=&x_{i}f_{i}v_i+y_{i}f_{i}e_{i}f_{i}v_i+z_{i}f_{i}v_{i}f_{i}v_i\\
&=&x_{i}f_{i}v_i+y_{i}f_{i}e_{i}(x_{i-1}f_{i-1}+y_{i-1}f_{i-1}e_{i-1}f_{i-1}+z_{i-1}f_{i-1}v_{i-1}f_{i-1})v_i\\
& &+z_{i}f_{i}v_{i}(x_{i-1}f_{i-1}+y_{i-1}f_{i-1}e_{i-1}f_{i-1}+z_{i-1}f_{i-1}v_{i-1}f_{i-1})v_i\\
&=&x_{i}f_{i}v_i+y_{i}x_{i-1}f_{i}e_{i}f_{i-1}v_i+y_{i}y_{i-1}f_{i}e_{i}f_{i-1}e_{i-1}f_{i-1}v_i\\
& &+y_{i}z_{i-1}f_{i}e_{i}f_{i-1}v_{i-1}f_{i-1}v_i+z_{i}x_{i-1}f_{i}v_{i}f_{i-1}v_i\\
& &+z_{i}y_{i-1}f_{i}v_{i}f_{i-1}e_{i-1}f_{i-1}v_i+z_{i}z_{i-1}f_{i}v_{i}f_{i-1}v_{i-1}f_{i-1}v_i\\
&=&x_{i}f_{i}v_i+y_{i}x_{i-1}f_{i}f_{i-1}e_{i}v_i+y_{i}y_{i-1}f_{i}f_{i-1}e_{i}e_{i-1}v_if_{i-1}\\
& &+y_{i}z_{i-1}f_{i}f_{i-1}e_{i}v_{i-1}v_if_{i-1}+z_{i}x_{i-1}f_{i}f_{i-1}v_{i}v_i\\
& &+z_{i}y_{i-1}f_{i}f_{i-1}v_{i}e_{i-1}v_if_{i-1}\\
& &+z_{i}z_{i-1}f_{i}f_{i-1}v_{i}v_{i-1}v_i f_{i-1}\ \ \  (using ~ U_if_{i-1}=f_{i-1}U_i,~U_i=e_i~or~v_i) \\
&=&x_{i}f_{i}v_i+y_{i}x_{i-1}f_{i}e_{i}+y_{i}y_{i-1}f_{i}e_{i}e_{i-1}v_if_{i-1}
+y_{i}z_{i-1}f_{i}e_{i}v_{i-1}v_if_{i-1}\\
& &+z_{i}x_{i-1}f_{i}+z_{i}y_{i-1}f_{i}v_{i}e_{i-1}v_if_{i-1}
+z_{i}z_{i-1}f_{i}v_{i}v_{i-1}v_i f_{i-1} \ \ \ (using \ 2_{i-1})\\
&=&x_{i}f_{i}v_i+y_{i}x_{i-1}f_{i}e_{i}+y_{i}y_{i-1}f_{i}e_{i}v_{i-1}f_{i-1}\\
& &+y_{i}z_{i-1}f_{i}e_{i}e_{i-1}f_{i-1}+z_{i}x_{i-1}f_{i}\\
& &+z_{i}y_{i-1}f_{i}v_{i-1}e_{i}v_{i-1}f_{i-1}
+z_{i}z_{i-1}f_{i}v_{i-1}v_{i}v_{i-1}f_{i-1} \ \ \ (using \  (iii))\\
&=&x_{i}f_{i}v_i+y_{i}x_{i-1}f_{i}e_{i}+y_{i}y_{i-1}f_{i}e_{i}v_{i-1}f_{i-1}
+y_{i}z_{i-1}f_{i}e_{i}e_{i-1}f_{i-1}\\
& &+z_{i}x_{i-1}f_{i}+z_{i}y_{i-1}f_{i}e_{i}v_{i-1}f_{i-1}
+z_{i}z_{i-1}f_{i}v_{i}v_{i-1}f_{i-1} \ \ \ (using \ 4_i)\\
&=&x_{i}f_{i}v_i+y_{i}x_{i-1}f_{i}e_{i}+(y_{i}y_{i-1}+z_{i}y_{i-1})f_{i}e_{i}v_{i-1}f_{i-1}\\
& &+y_{i}z_{i-1}f_{i}e_{i}e_{i-1}f_{i-1}+z_{i}x_{i-1}f_{i}
+z_{i}z_{i-1}f_{i}v_{i}v_{i-1}f_{i-1}\\
&=&x_{i}f_{i}v_i+y_{i}x_{i-1}f_{i}e_{i}+y_{i}z_{i-1}f_{i}e_{i}v_{i-1}f_{i-1}\\
& &+y_{i}z_{i-1}f_{i}e_{i}e_{i-1}f_{i-1}+x_{i}f_{i}
+z_{i}z_{i-1}f_{i}v_{i}v_{i-1}f_{i-1} \ \ \ (using \ (i)(ii))
\end{eqnarray*}
\begin{eqnarray*}
f_{i+1}&=&x_{i}f_{i}+y_{i}f_{i}e_{i}f_{i}+z_{i}f_{i}v_{i}f_{i}\\
&=&x_{i}f_{i}+y_{i}f_{i}e_{i}(x_{i-1}f_{i-1}+y_{i-1}f_{i-1}e_{i-1}f_{i-1}+z_{i-1}f_{i-1}v_{i-1}f_{i-1})\\
& &+z_{i}f_{i}v_{i}(x_{i-1}f_{i-1} +y_{i-1}f_{i-1}e_{i-1}f_{i-1}+z_{i-1}f_{i-1}v_{i-1}f_{i-1})\\
&=&x_{i}f_{i}+y_{i}x_{i-1}f_{i}e_{i}f_{i-1}+y_{i}y_{i-1}f_{i}e_{i}f_{i-1}e_{i-1}f_{i-1}\\
& &+y_{i}z_{i-1}f_{i}e_{i}f_{i-1}v_{i-1}f_{i-1}+z_{i}x_{i-1}f_{i}v_{i}f_{i-1}\\
& &+z_{i}y_{i-1}f_{i}v_{i}f_{i-1}e_{i-1}f_{i-1}
+z_{i}z_{i-1}f_{i}v_{i}f_{i-1}v_{i-1}f_{i-1}
\end{eqnarray*}
\begin{eqnarray*}
&=&x_{i}f_{i}+y_{i}x_{i-1}f_{i}f_{i-1}e_{i}+y_{i}y_{i-1}f_{i}f_{i-1}e_{i}e_{i-1}f_{i-1}\\
& &+y_{i}z_{i-1}f_{i}f_{i-1}e_{i}v_{i-1}f_{i-1}+z_{i}x_{i-1}f_{i}f_{i-1}v_{i}+z_{i}y_{i-1}f_{i}f_{i-1}v_{i}e_{i-1}f_{i-1}\\
& &+z_{i}z_{i-1}f_{i}f_{i-1}v_{i}v_{i-1}f_{i-1}\ \ \  (using \ U_if_{i-1}=f_{i-1}U_i, U_i=e_i~or~v_i) \\
&=&x_{i}f_{i}+y_{i}x_{i-1}f_{i}e_{i}+y_{i}y_{i-1}f_{i}e_{i}e_{i-1}f_{i-1}+y_{i}z_{i-1}f_{i}e_{i}v_{i-1}f_{i-1}\\
& &+z_{i}x_{i-1}f_{i}v_{i}+z_{i}y_{i-1}f_{i}v_{i}e_{i-1}f_{i-1}+z_{i}z_{i-1}f_{i}v_{i}v_{i-1}f_{i-1} \ \ \ (by \ 2_{i-1})
\end{eqnarray*}
\begin{eqnarray*}
& &f_{i+1}v_i-f_{i+1}\\
&=&x_{i}f_{i}v_i+y_{i}x_{i-1}f_{i}e_{i}+y_{i}z_{i-1}f_{i}e_{i}v_{i-1}f_{i-1}\\
& &+y_{i}z_{i-1}f_{i}e_{i}e_{i-1}f_{i-1}+x_{i}f_{i}+z_{i}z_{i-1}f_{i}v_{i}v_{i-1}f_{i-1}\\
& &-[x_{i}f_{i}+y_{i}x_{i-1}f_{i}e_{i}+y_{i}y_{i-1}f_{i}e_{i}e_{i-1}f_{i-1}+y_{i}z_{i-1}f_{i}e_{i}v_{i-1}f_{i-1}\\
& &+z_{i}x_{i-1}f_{i}v_{i}+z_{i}y_{i-1}f_{i}v_{i}e_{i-1}f_{i-1}+z_{i}z_{i-1}f_{i}v_{i}v_{i-1}f_{i-1}]\\
&=&y_{i}(z_{i-1}-y_{i-1})f_{i}e_{i}e_{i-1}f_{i-1}-z_{i}y_{i-1}f_{i}v_{i}e_{i-1}f_{i-1}\\
&=&y_{i-1}z_{i}f_{i}e_{i}e_{i-1}f_{i-1}-z_{i}y_{i-1}f_{i}v_{i}e_{i-1}f_{i-1} \ \ \ (using \ (ii))\\
&=&y_{i-1}z_{i}[f_{i}e_{i}e_{i-1}f_{i-1}-f_{i}v_{i}e_{i-1}f_{i-1}]
\end{eqnarray*}
\begin{eqnarray*}
& &f_{i}e_{i}e_{i-1}f_{i-1}v_i-f_{i}v_{i}e_{i-1}f_{i-1}v_i\\
&=&f_{i}e_{i}e_{i-1}v_i f_{i-1}-f_{i}v_{i}e_{i-1}v_i f_{i-1}\\
&=&f_{i}e_{i}v_{i-1}f_{i-1}-f_{i}v_{i-1}e_{i}v_{i-1}f_{i-1}\\
&=&f_{i}e_{i}v_{i-1}f_{i-1}-f_{i}e_{i}v_{i-1}f_{i-1} \ \ \ (using \ 4_i) \\
&=&0
\end{eqnarray*}
Hence $f_{i}e_{i}e_{i-1}f_{i-1}v_i=f_{i}v_{i}e_{i-1}f_{i-1}v_i$

Then $f_{i}e_{i}e_{i-1}f_{i-1}v_iv_i=f_{i}v_{i}e_{i-1}f_{i-1}v_iv_i$, i.e., $f_{i}e_{i}e_{i-1}f_{i-1}=f_{i}v_{i}e_{i-1}f_{i-1}.$

Therefore $f_{i+1}v_i-f_{i+1}=0$, i.e., $f_{i+1}v_i=f_{i+1}.$

With the same reason, we can get $v_if_{i+1}=f_{i+1}.$

$(5_{i+1})$ \ \ Since $f_{i}e_{i+1}=e_{i+1}f_{i}$, $e_{i+1}f_{i}e_{i+1}=d e_{i+1}f_{i}$, $e_{i+1}e_{i}e_{i+1}=e_{i+1}$
and $e_{i+1}v_{i}e_{i+1}=e_{i+1}$, then
\begin{eqnarray*}
(e_{i+1}f_{i+1})^2 &=&e_{i+1}(x_{i}f_{i}+y_{i}f_{i}e_{i}f_{i}+z_{i}f_{i}v_{i}f_{i})e_{i+1}f_{i+1}\\
&=&(d x_{i}e_{i+1}f_{i}+y_{i}f_{i}e_{i+1}f_{i}+z_{i}f_{i}e_{i+1}f_{i})f_{i+1}\\
&=&d x_{i}e_{i+1}f_{i+1}+y_{i}e_{i+1}f_{i+1}+z_{i}e_{i+1}f_{i+1} \ \ (from \ \ 2_i).\\
&=&(d x_{i}+y_{i}+z_{i})e_{i+1}f_{i+1}\\
&=&\alpha_{i}e_{i+1}f_{i+1}
\end{eqnarray*}

With the same reason, we can get
$(f_{i+1}e_{i+1})^2=\alpha_{i}f_{i+1}e_{i+1}.$

$(6_{i+1})$ \ \ Since $f_{i}v_{i+1}=v_{i+1}f_{i}$, $v_{i+1}^2=1_{n}$
,$v_{i+1}e_{i}v_{i+1}=v_{i}e_{i+1}v_{i}$ and $v_{i+1}v_{i}v_{i+1}$ $=v_{i}v_{i+1}v_{i}$, then
\begin{eqnarray*}
& &f_{i+1}v_{i+1}f_{i+1}v_{i+1}f_{i+1}\\
&=&f_{i+1}v_{i+1}(x_{i}f_{i}+y_{i}f_{i}e_{i}f_{i}+z_{i}f_{i}v_{i}f_{i})v_{i+1}f_{i+1}\\
&=&x_{i}f_{i+1}f_{i}v_{i+1}v_{i+1}f_{i+1}+y_{i}f_{i+1}f_{i}v_{i+1}e_{i}v_{i+1}f_{i}f_{i+1}\\
& &+z_{i}y_{i}f_{i+1}f_{i}v_{i+1}v_{i}v_{i+1}f_{i}f_{i+1}\\
&=&x_{i}f_{i+1}+y_{i}f_{i+1}v_{i}e_{i+1}v_{i}f_{i+1}
+z_{i}y_{i}f_{i+1}v_{i}v_{i+1}v_{i}f_{i+1} \ \ \ (using \ 2_i,\ 1_{i+1}) \\
&=&x_{i}f_{i+1}+y_{i}f_{i+1}e_{i+1}f_{i+1}+z_{i}f_{i+1}v_{i+1}f_{i+1} \ \ (from \ \ 4_i)
\end{eqnarray*}

$(7_{i+1})$
\begin{eqnarray*}
& &f_{i+1}v_{i+1}f_{i+1}e_{i+1}\\
&=&f_{i+1}v_{i+1}(x_{i}f_{i}+y_{i}f_{i}e_{i}f_{i}+z_{i}f_{i}v_{i}f_{i})e_{i+1}\\
&=&x_{i}f_{i+1}f_{i}v_{i+1}e_{i+1}+y_{i}f_{i+1}f_{i}v_{i+1}e_{i}e_{i+1}f_{i}+z_{i}f_{i+1}f_{i}v_{i+1}v_{i}e_{i+1}f_{i}\\
&=&x_{i}f_{i+1}v_{i+1}e_{i+1}+y_{i}f_{i+1}v_{i+1}e_{i}e_{i+1}f_{i}+z_{i}f_{i+1}v_{i+1}v_{i}e_{i+1}f_{i}\ \ (using \ 2_i)\\
&=&x_{i}f_{i+1}e_{i+1}+y_{i}f_{i+1}v_{i}e_{i+1}f_{i}+z_{i}f_{i+1}e_{i}e_{i+1}f_{i}\ \ (using \ (iii))\\
&=&x_{i}f_{i+1}e_{i+1}+y_{i}f_{i+1}e_{i+1}f_{i}\ \ (using \ 4_{i+1},\ 3_{i+1})\\
&=&(x_{i}+y_i)f_{i+1}e_{i+1}
\end{eqnarray*}
With the same reason, we can get
$e_{i+1}f_{i+1}v_{i+1}f_{i+1}=(x_{i}+y_{i})e_{i+1}f_{i+1}.$
\end{proof}

\begin{remark}
Note that in $VTL_{n}(d)$ there are $n$ projectors satisfying $f_i^2=f_i$ for $i=1,2,...,n$.
And there exists one projector $f_n$ satisfies $f_n^2=f_n$, $f_ne_{k}=0,$ and $f_nv_{k}=f_n$ for $k\leq n-1.$
\end{remark}

\begin{proposition}\label{unique}
There is a unique non-zero element $f\in VTL_{n}(d)$ such that
\begin{enumerate}
\item [(i)] $f^2=f$,
\item [(ii)] $fe_i=e_if=0$, $i=1,2,...,n-1,$
\item [(iii)] $fv_i=v_if=f$, $i=1,2,...,n-1.$
\end{enumerate}
\end{proposition}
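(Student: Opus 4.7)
The plan is to reduce uniqueness to a one-dimensional calculation via a linear functional obtained by summing the coefficients of permutation diagrams. Existence is immediate: Lemma \ref{new} at $i=n$ provides $f_n$, which satisfies $f_n^2=f_n$ by $(1_n)$, $f_n e_k = e_k f_n = 0$ for $k \leq n-1$ by $(3_n)$, and $f_n v_k = v_k f_n = f_n$ for $k \leq n-1$ by $(4_n)$. So it remains only to show uniqueness.

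For uniqueness, I recall that since $VTL_n(d)$ is isomorphic to the Brauer algebra, its basis consists of the $(2n-1)!!$ Brauer diagrams. These split naturally into two classes: \emph{permutation diagrams} (all $n$ strands go top to bottom), each of which is a product of virtual generators $v_{i_1}\cdots v_{i_k}$; and \emph{non-permutation diagrams}, each of which has at least one top-top cap and at least one bottom-bottom cap. The key diagrammatic ingredient I need is that every non-permutation diagram $D$ admits factorizations
\[
D = \pi_1 \cdot e_i \cdot D'' \quad \text{and} \quad D = D''' \cdot e_j \cdot \pi_2,
\]
where $\pi_1,\pi_2$ are permutations (products of $v$'s) and $D'',D''' \in VTL_n(d)$. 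Intuitively one slides a cap to adjacent positions using virtual crossings to expose a literal $e_i$; the precise combinatorial reduction parallels Kauffman's argument for $TL_n$ recalled earlier from \cite{Menasco}.

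Given this factorization, for any $h \in VTL_n(d)$ with Brauer-basis expansion $h = \sum_D c_D D$, I define $P(h) := \sum_{D \text{ permutation}} c_D$. Then hypotheses (ii) and (iii) applied termwise give $fh = P(h)\,f$: a permutation diagram $D = v_{i_1}\cdots v_{i_k}$ is absorbed by $f$ via repeated use of $fv_i = f$ to produce $f$, while a non-permutation diagram $D = \pi_1 e_i D''$ is killed via $f\pi_1 = f$ followed by $fe_i = 0$. The symmetric argument with the factorization $D = D''' e_j \pi_2$ yields $hf = P(h)\,f$ as well. Taking $h = f$ and using $f^2 = f$ together with $f \neq 0$ forces $P(f) = 1$.

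Now let $g$ be any second element satisfying (i)--(iii). The identical reasoning applied to $g$ yields $gh = hg = P(h)\,g$ for every $h \in VTL_n(d)$ and $P(g) = 1$. Computing the product $fg$ in two ways then finishes the proof:
\[
fg = P(g)\,f = f \qquad \text{and} \qquad fg = P(f)\,g = g,
\]
so $f = g$. The main obstacle is the diagrammatic factorization lemma for non-permutation Brauer diagrams in $VTL_n(d)$; once that is in hand, the remainder is a short and formal manipulation with the linear functional $P$.
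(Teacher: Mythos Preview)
Your proof is correct and follows essentially the same route as the paper's: both split the Brauer basis into permutation diagrams (absorbed by $f$ via (iii)) and non-permutation diagrams (annihilated by $f$ via a factorization exposing some $e_i$, then (ii)), and both finish with the sandwich $f = fg = g$. Your use of the linear functional $P$ is a mild streamlining---the paper additionally pins down each permutation coefficient as $\frac{1}{n!}$ before reaching the same conclusion---and the factorization step you flag as the main obstacle is treated at the same level of detail in the paper (asserted ``by (ii) and (iii)'').
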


\begin{proof}
Lemma \ref{new}($1_i$)($3_i$)($4_i$) asserts the existence of the element. Now we prove the uniqueness. Recall that there are $(2n-1)!!$ elements in a base.
It is easy to note that there are $n!$ elements which do not contain any $e_i$, i.e., they are the products of the elements $\{1_{n},v_1,v_2,...,v_{n-1}\}$.
We denote them by $\{\epsilon_1,\epsilon_2$, $...,\epsilon_{n!}\}$ where $\epsilon_1=1_{n}$.
And we denote the other elements in the base by $\{\epsilon_{n!+1},$ $\epsilon_{n!+2},...,\epsilon_{(2n-1)!!}\}$.

For any $f\in VTL_{n}(d)$, if it satisfies (ii) and (iii), then $f\epsilon_i=f$ for $i=1,2,...,n!$ by (iii) and $f\epsilon_i=0$ for $i=n!+1,n!+2,...,(2n-1)!!$ by (ii) and (iii).
Let $f=\sum_{i=1}^{(2n-1)!!}x_i\epsilon_i$. Note that for each $k\in\{1,2,...,n!\}$, there exists unique $j\in\{1,2,...,n!\}$ such that $\epsilon_j=\epsilon_k^{-1}$ and $\epsilon_k*\epsilon_k^{-1}=\epsilon_1=1_{n}$.
Then $\epsilon_jf=\epsilon_j\sum_{i=1}^{(2n-1)!!}x_i\epsilon_i=x_k\epsilon_1+\sum_{i\neq k}x_i\epsilon_j\epsilon_i=f=x_1\epsilon_1+\sum_{i\neq 1}x_i\epsilon_i$.
Note that $\epsilon_j\epsilon_i\neq\epsilon_j\epsilon_{l}$ if $i\neq l$ and there does not exist $\epsilon_j\epsilon_i=\epsilon_1$ for some $i\neq k$.
Thus $x_k=x_1$.

Moreover, $f=f^2=f\sum_{i=1}^{(2n-1)!!}x_i\epsilon_i=\sum_{i=1}^{n!}x_if$, then $1=\sum_{i=1}^{n!}x_i$ and $x_k=\frac{1}{n!}$ for $k\in\{1,2,...,n!\}$. It means that $f=\frac{1}{n!}\sum_{i=1}^{n!}\epsilon_i+\sum_{i=n!+1}^{(2n-1)!!}x_i\epsilon_i$.

Suppose that $g=\frac{1}{n!}\sum_{i=1}^{n!}\epsilon_i+\sum_{i=n!+1}^{(2n-1)!!}y_i\epsilon_i$ also satisfies $(i)(ii)(iii)$. Then $g\epsilon_i=\epsilon_ig=g$ for $i\in\{1,2,...,n!\}$ and $g\epsilon_i=\epsilon_ig=0$ for $i\in \{n!+1,n!+2,...,(2n-1)!!\}.$
Then $(f-\frac{1}{n!}\sum_{i=1}^{n!}\epsilon_i)g=0$ and $f(g-\frac{1}{n!}\sum_{i=1}^{n!}\epsilon_i)=0$. Hence $g=fg=f$.
\end{proof}
\skip 0.5cm

\section{Simplifications of the recurrence formula}
Let $f_n\in VTL_{n}(d)$ and $f_{n+1}\in VTL_{n+1}(d)$ be the projectors which satisfy the three conditions in Proposition \ref{unique}.
We call $f_n$ the $n$-th projector of virtual Temperley-Lieb algebra.
The $(n+1)$-th projector is constructed from the $n$-th projector as shown in Figure \ref{F:fn1virtualprojector}.
In $f_{n+1}=x_{n}f_{n}+y_{n}f_{n}e_{n}f_{n}+z_{n}f_{n}v_{n}f_{n}$, $x_nf_n$ means $x_nf_n1_{n+1}$.
By expanding this appropriately, we will see that many of the terms do not contribute.

\begin{figure}
\centering
\includegraphics{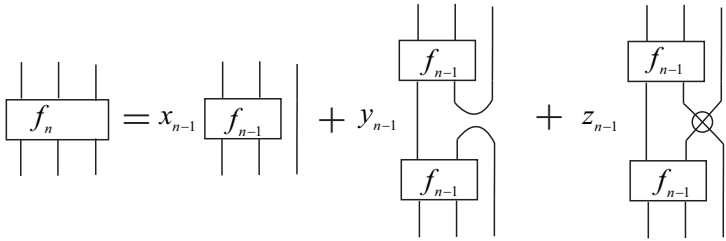}
\caption{The recurrence relation between projectors.}
\label{F:fnfn1}
\end{figure}

We define $\mathcal{K}_{n}\subset VTL_{n}(d)$ as the linear span of diagrams $D$s which belong to the set $A_{n}$, where

\begin{eqnarray}
A_{n}&=&\{1_{n},U_{n-1}U_{n-2}\cdots U_{i+1}U_{i}|1\leq i\leq n-1, U_{j}=e_j ~\text{or}~ v_j\}.
\label{E:An}
\end{eqnarray}

This is illustrated for $n=3$ in Figure \ref{F:K3}.
It is easy to obtain that $dim(\mathcal{K}_2)=3$, $dim(\mathcal{K}_3)=7$ and $dim(\mathcal{K}_4)=15$.
By simple calculating, we see $\mathcal{K}_n$ has dimension $2^{n}-1$.

\begin{figure}[!htbp]
\centering
\includegraphics[width=4.0in]{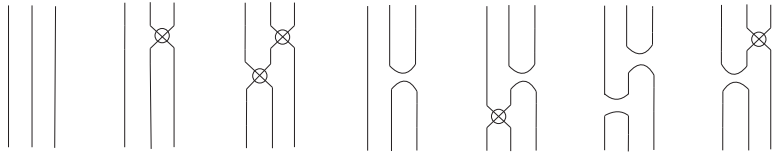}
  \caption{The diagrams spanning $\mathcal{K}_3$ of the set $A_3$.}\label{F:K3}
\end{figure}

We shall construct $f_n^{\mathcal{K}}\in \mathcal{K}_n$ by using the following recursion:
\begin{eqnarray}
f_2^{\mathcal{K}}&=&x_{1}1_{2}+y_{1}e_{1}+z_{1}v_{1}\\
f_{n}^{\mathcal{K}}&=&x_{n-1}1_{n}+y_{n-1}e_{n-1}f_{n-1}^\mathcal{K}+z_{n-1}v_{n-1}f_{n-1}^\mathcal{K}
\end{eqnarray}

Next we can simplify the recurrence formula for projectors as follows.

\begin{lemma}\label{L:fmK}
Let $f_{n}$ be the projector of $VTL_{n}(d)$ which satisfies the three conditions in Proposition \ref{unique}.
Then we claim that
\begin{equation}\label{E:simplifiedformula}
  f_{n}=f_{n-1}f_{n}^{\mathcal{K}}.
\end{equation}
\end{lemma}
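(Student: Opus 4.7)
The plan is to prove $f_n = f_{n-1} f_n^{\mathcal{K}}$ by induction on $n$, relying only on the recursive definitions of $f_n$ and $f_n^{\mathcal{K}}$ together with two elementary facts from Lemma \ref{new}: the absorption property $f_{n-1} f_{n-2} = f_{n-1}$ (property $2_{n-2}$), and the commutativity of $f_{n-2}$ with $e_{n-1}$ and $v_{n-1}$ (the remark immediately following Definition \ref{D:virtualpj} that $f_i$ commutes with every $e_{i+k},\, v_{i+k}$ for $k \geq 1$).

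For the base case $n=2$, a direct computation gives $f_1 f_2^{\mathcal{K}} = 1_2 \cdot (x_{1} 1_{2} + y_{1} e_{1} + z_{1} v_{1}) = x_1 1_2 + y_1 e_1 + z_1 v_1$, which matches $f_2 = x_1 f_1 + y_1 f_1 e_1 f_1 + z_1 f_1 v_1 f_1 = x_1 1_2 + y_1 e_1 + z_1 v_1$. For the inductive step, I expand
\begin{align*}
f_{n-1} f_n^{\mathcal{K}} &= x_{n-1} f_{n-1} + y_{n-1} f_{n-1} e_{n-1} f_{n-1}^{\mathcal{K}} + z_{n-1} f_{n-1} v_{n-1} f_{n-1}^{\mathcal{K}},
\end{align*}
and compare with the defining recursion $f_n = x_{n-1} f_{n-1} + y_{n-1} f_{n-1} e_{n-1} f_{n-1} + z_{n-1} f_{n-1} v_{n-1} f_{n-1}$. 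It therefore suffices to show $f_{n-1} e_{n-1} f_{n-1} = f_{n-1} e_{n-1} f_{n-1}^{\mathcal{K}}$ and the analogous identity with $v_{n-1}$ in place of $e_{n-1}$.

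To establish these, I substitute the inductive hypothesis $f_{n-1} = f_{n-2} f_{n-1}^{\mathcal{K}}$ into the rightmost $f_{n-1}$, then push $f_{n-2}$ past $e_{n-1}$ (or $v_{n-1}$) using the commutativity recalled above, and finally absorb $f_{n-1} f_{n-2}$ back into $f_{n-1}$ via property $2_{n-2}$:
\begin{align*}
f_{n-1} e_{n-1} f_{n-1} = f_{n-1} e_{n-1} f_{n-2} f_{n-1}^{\mathcal{K}} = f_{n-1} f_{n-2} e_{n-1} f_{n-1}^{\mathcal{K}} = f_{n-1} e_{n-1} f_{n-1}^{\mathcal{K}},
\end{align*}
and the identical chain of equalities holds with $v_{n-1}$ replacing $e_{n-1}$. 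This closes the induction. The only point requiring care is the indexing: the commutativity available from Lemma \ref{new} is between $f_{n-2}$ and $\{e_{n-1}, v_{n-1}\}$, not between $f_{n-1}$ and these generators, which is why the inductive hypothesis $f_{n-1} = f_{n-2} f_{n-1}^{\mathcal{K}}$ must be invoked before the commutation step. I do not foresee any real obstacle beyond this bookkeeping.
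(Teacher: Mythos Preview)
Your argument is correct and follows essentially the same route as the paper's proof: induction on $n$, substituting the inductive hypothesis into the rightmost projector, commuting the smaller projector $f_{n-2}$ past $e_{n-1}$ (resp.\ $v_{n-1}$), and then absorbing via $f_{n-1}f_{n-2}=f_{n-1}$. The only cosmetic difference is that the paper expands $f_{n+1}$ and shows it equals $f_n f_{n+1}^{\mathcal{K}}$, whereas you expand $f_{n-1}f_n^{\mathcal{K}}$ and show it equals $f_n$; the underlying manipulation is identical.
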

\begin{proof}
We induct on $n$.
Note that $f_{2}^{K}=f_2$. The Lemma is clearly true when $n=2$, that is, $f_{2}=f_{1}f_{2}^{\mathcal{K}}.$ Inductively suppose it is true for a given $n$, and then for $n+1$, we have
\begin{eqnarray*}
f_{n+1}&=&x_{n}f_{n}+y_{n}f_{n}e_{n}f_{n}+z_{n}f_{n}v_{n}f_{n} \\
&=&x_{n}f_{n}+y_{n}f_{n}e_{n}f_{n-1}f_{n}^{\mathcal{K}}+z_{n}f_{n}v_{n}f_{n-1}f_{n}^{\mathcal{K}}\\
&=&x_{n}f_{n}+y_{n}f_{n}f_{n-1}e_{n}f_{n}^{\mathcal{K}}+z_{n}f_{n}f_{n-1}v_{n}f_{n}^{\mathcal{K}}\\
&=&x_{n}f_{n}+y_{n}f_{n}e_{n}f_{n}^{\mathcal{K}}+z_{n}f_{n}v_{n}f_{n}^{\mathcal{K}}~~(using~ f_nf_{n-1}=f_{n})\\
&=&f_{n}(x_{n}1_{n+1}+y_{n}e_{n}f_{n}^{\mathcal{K}}+z_{n}v_{n}f_{n}^{\mathcal{K}})\\
&=&f_{n}f_{n+1}^{\mathcal{K}}
\end{eqnarray*}
where we use $e_nf_{n-1}=f_{n-1}e_{n}$, $v_nf_{n-1}=f_{n-1}v_{n}$ in the second equation, $f_nf_{n-1}=f_n$ in the third equation.
\end{proof}

\begin{remark}
Recall that $f_{n}=f_{n-1}[x_{n-1}1_{n}+y_{n-1}e_{n-1}f_{n-1}+z_{n-1}v_{n-1}f_{n-1}]$.
But now we have that $f_{n}=f_{n-1}[x_{n-1}1_{n}+y_{n-1}e_{n-1}f_{n-1}^\mathcal{K}+z_{n-1}v_{n-1}f_{n-1}^\mathcal{K}]$, which is more simpler.
\end{remark}

\section{The coefficient of projectors}
It is obvious that we can construct a natural mapping from $VTL_{n}(d)$ to symmetric group $S_{2n}$.
Then every element of $VTL_{n}(d)$ can be represented by a permutation which is the product of $n$ non-intersecting transpositions.
Note that each connect element of $VTL_{n}(d)$ consists of a number of straight strands and some turn-back.
A $\textit{through strand}$ \cite{Mor} is an arc joining with a point on the top edge of a diagram of an element of $VTL_{n}(d)$ to another point on the bottom edge.
A $\textit{vertical strand}$ is an arc joining with a point on the top edge of a diagram of an element of $VTL_{n}(d)$ to the corresponding point on the bottom edge.
A \textit{cup} joins a point on the top edge with another point on top edge, and similarly a \textit{cap} joins a point on the bottom edge with another point on bottom edge.
A cap or cup is called a \textit{cap on site $i$} if it connects continuous two point on bottom or top edge.
This terminology is illustrated in Figure \ref{F:terminology}.

We shall call an element with $k$ through strands as a \textit{$k$-element} of $VTL_{n}(d)$.
Denote the set of $k$-elements by $\mathcal{E}_n^k$.
In Proposition \ref{unique}, we take the set of $n$-elements as $\{\epsilon_1,\epsilon_2$, $...,\epsilon_{n!}\}$ where $\epsilon_1=1_{n}$. That is, $\mathcal{E}_n^n=\{\epsilon_1,\epsilon_2$, $...,\epsilon_{n!}\}$.
Note that $(\mathcal{E}_n^n,\cdot)$ forms a multiplicative group under the product in $VTL_{n}(d)$.
Let $[k]_n$ be the sum over all $k$-elements of $VTL_{n}(d)$.
Note that $k=n,n-2,n-4,...,n-2[\frac{n}{2}]$ and $n-k=2l$ ($0\leq l\leq [\frac{n}{2}]$).
Note that there has at most one cap and one cup in any element of ${A}_{n}$ (see Eq. (\ref{E:An})), that is, each element of ${A}_{n}$ is either $(n-2)$-element or $n$-element.

\begin{figure}[!htbp]
\centering
\includegraphics{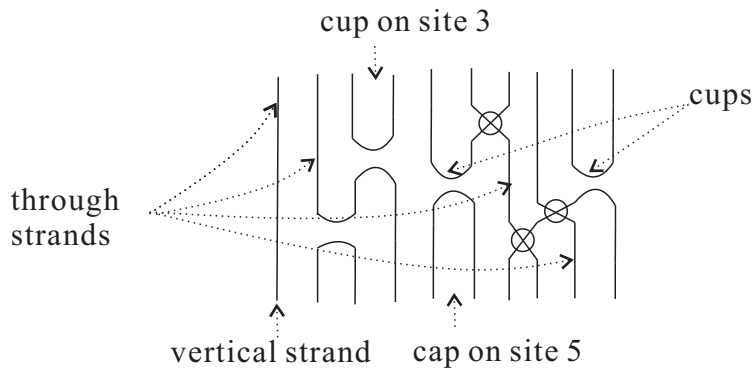}
\caption{The terminology.}\label{F:terminology}
\end{figure}

\begin{lemma}\label{kts}
For each $k\in \{n,n-2,n-4,...,n-2[\frac{n}{2}]\}$, the product between any $k$-element of $VTL_{n}(d)$ with $\alpha\in \mathcal{E}_n^n$ is still a $k$-element of $VTL_{n}(d)$.
\end{lemma}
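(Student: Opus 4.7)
The plan is to use the diagrammatic (Brauer) description of $VTL_n(d)$ that has been in force throughout the paper: each basis element corresponds to a pairing of its $2n$ boundary points, possibly weighted by a power of $d$ arising from closed loops produced in compositions. In this language, a $k$-element $D$ has $k$ through strands, $(n-k)/2$ cups on top, and $(n-k)/2$ caps on bottom, while an element $\alpha \in \mathcal{E}_n^n$ has no cups or caps at all and instead realizes some permutation $\sigma_\alpha \in S_n$ via $n$ through strands. The lemma will therefore follow once I track what happens to through strands, cups, and caps under stacking.

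First I would analyze the right product $D \cdot \alpha$, formed by stacking $\alpha$ beneath $D$ and matching the bottom edge of $D$ pointwise to the top edge of $\alpha$. The crucial observation is that since $\alpha$ contains no cups or caps, every top endpoint of $\alpha$ is connected inside $\alpha$ to a unique bottom endpoint of $\alpha$ and to nothing else; hence no arc that enters $\alpha$ from above can ever return to $D$, so no closed loops are produced in the composition and no scalar factor of $d$ appears. Tracing the resulting pairings then shows that each cup of $D$ survives as a cup of $D\alpha$; each through strand of $D$ ending at bottom point $j$ of $D$ is extended by the unique through strand of $\alpha$ from top $j$ to bottom $\sigma_\alpha(j)$ and remains a through strand; and each cap of $D$ pairing bottom points $j_1, j_2$ becomes a cap of $D\alpha$ pairing $\sigma_\alpha(j_1), \sigma_\alpha(j_2)$. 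Consequently $D\alpha$ has exactly $k$ through strands and is itself a $k$-element.

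The left product $\alpha \cdot D$ is handled by the mirror-image argument: stack $\alpha$ above $D$ and carry out the same bookkeeping on the top-edge pairings, or equivalently apply the vertical-reflection anti-automorphism of $VTL_n(d)$, which preserves $\mathcal{E}_n^n$ (since it sends $\alpha$ to $\alpha^{-1}$) and clearly preserves the through-strand count. Either route, the same no-cups-and-no-caps property of $\alpha$ rules out closed loops, and the same argument produces a $k$-element.

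The only real subtlety, and the step worth stating explicitly, is the no-loop verification: a closed loop in the stacking would contribute a factor of $d$ and reduce the resulting diagram to one with strictly fewer through strands, which would break the claim. This is precisely why $\alpha \in \mathcal{E}_n^n$ is required rather than an arbitrary basis element. Once the no-loop observation is in hand, the remainder of the argument reduces to recording how $\sigma_\alpha$ relabels the bottom (or top) endpoints of $D$, which is purely combinatorial.
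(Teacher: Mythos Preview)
Your proof is correct and follows essentially the same diagrammatic approach as the paper: both arguments observe that since $\alpha\in\mathcal{E}_n^n$ consists entirely of through strands, composing with $\alpha$ merely permutes the bottom (resp.\ top) endpoints of a $k$-element, sending caps to caps, cups to cups, and through strands to through strands without creating any closed loops. The paper's proof compresses this into a single sentence, whereas you spell out the endpoint-tracing and the no-loop verification explicitly; your aside that a hypothetical closed loop would necessarily reduce the through-strand count is not quite accurate in general, but it is irrelevant here since you have already shown no such loop can arise.
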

\begin{proof}
It is because of $\alpha$ permutes a cap (resp. cup) and a through strand to a cap (resp. cup) strand and a through strand for any $k$-element multiplying by $\alpha$ on the right (resp. left), respectively.
\end{proof}

\begin{lemma}\label{kel}
For each $k\in \{n,n-2,n-4,...,n-2[\frac{n}{2}]\}$, given any two $k$-elements $x,y$ of $VTL_{n}(d)$, then there exists elements $\alpha,\beta\in \mathcal{E}_n^n$ such that $y=\alpha x\beta$.
\end{lemma}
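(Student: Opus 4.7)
The plan is to establish a canonical form: produce a distinguished $k$-element $\epsilon_0 \in \mathcal{E}_n^k$ and show that every $x \in \mathcal{E}_n^k$ admits a factorization $x = \alpha_x \, \epsilon_0 \, \beta_x$ with $\alpha_x,\beta_x \in \mathcal{E}_n^n$. Given such a factorization for both $x$ and $y$, one simply takes $\alpha = \alpha_y \alpha_x^{-1}$ and $\beta = \beta_x^{-1} \beta_y$; both lie in $\mathcal{E}_n^n$ since, as noted just before Lemma \ref{kts}, $(\mathcal{E}_n^n,\cdot)$ is a multiplicative group. Then $\alpha x \beta = y$ by direct cancellation, finishing the proof.

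To prove the factorization, I would first record that an element of $\mathcal{E}_n^k$ is determined by three pieces of combinatorial data: the matching of the $n$ top points into $\tfrac{n-k}{2}$ cups and $k$ singletons; the analogous matching of the bottom points into caps and singletons; and a bijection between the top singletons and the bottom singletons recording how the through-strands connect. The second step is to observe how the left/right action of $\mathcal{E}_n^n$ operates on this data: stacking a permutation $\alpha$ on top of $x$ leaves the bottom matching of $x$ untouched while transporting the top matching and the top singleton set through $\alpha$, precomposing the through-strand bijection with the induced map on singletons; right multiplication by $\beta \in \mathcal{E}_n^n$ acts analogously on the bottom side.

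For the canonical element take $\epsilon_0$ with cups $\{1,2\},\{3,4\},\ldots,\{n-k-1,n-k\}$ on top, the corresponding caps on the bottom, and straight through-strands connecting top position $n-k+j$ to bottom position $n-k+j$ for $1 \le j \le k$. Given $x \in \mathcal{E}_n^k$, I would first choose any permutation $\alpha_x$ sending the cup pairs of $\epsilon_0$ onto the cup pairs of $x$ on top (such an $\alpha_x$ exists because any two matchings of $n$ points with the same pair count are conjugate in $S_n$); independently choose $\beta_x$ to match the bottom cap patterns. This arranges the top and bottom matchings of $\alpha_x \epsilon_0 \beta_x$ to agree with those of $x$. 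There remains an $S_k$-worth of freedom in $\alpha_x$, namely permuting how the singleton positions $\{n-k+1,\ldots,n\}$ of $\epsilon_0$ are sent to the top singletons of $x$; using this residual freedom I would arrange the resulting through-strand bijection of $\alpha_x \epsilon_0 \beta_x$ to coincide with the through-strand bijection of $x$.

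The main obstacle, and the only place requiring genuine verification, is this last step: checking that after matching the cup and cap configurations, the remaining freedom in $\alpha_x$ is exactly a copy of $S_k$ acting faithfully on the through-strand bijection, so that an arbitrary bijection is achievable. The swaps within individual cups must be tracked as well, but they act trivially on the singletons and hence do not interfere. Once this bookkeeping is done, the canonical-form factorization is established and the conclusion $y = \alpha x \beta$ follows from group manipulation in $\mathcal{E}_n^n$.
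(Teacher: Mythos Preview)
Your proposal is correct and follows essentially the same idea as the paper's proof, which is extremely terse: it simply observes that elements of $\mathcal{E}_n^n$ act by permuting the top and bottom endpoints of a diagram (invoking Lemma~\ref{kts} to see that the through-strand count is preserved), so any two $k$-elements can be matched. Your canonical-form argument is the natural way to make that observation rigorous, and your bookkeeping of the cup/cap matchings together with the residual $S_k$ freedom on through-strands is exactly what the paper leaves implicit.
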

\begin{proof}
The elements $\alpha$ and $\beta$ rearrange the order of points on top edge and bottom edge, respectively, meanwhile do not change the number of through strands by Lemma \ref{kts}.
\end{proof}

\begin{proposition}\label{coef}
Let $f_n=\sum_{i=1}^{(2n-1)!!}x_i\epsilon_i$ be the projector of $VTL_{n}(d)$.
For each $k\in \{n,n-2,n-4,...,n-2[\frac{n}{2}]\}$, given any two $k$-elements $x,y$ of $VTL_{n}(d)$, then the coefficients of $x,y$ are equivalent.
\end{proposition}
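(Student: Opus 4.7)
The plan is to combine Lemma~\ref{kel}, which provides $\alpha,\beta\in\mathcal{E}_n^n$ with $y=\alpha x\beta$ for any two $k$-elements $x,y$, with the $\mathcal{E}_n^n$-invariance of $f_n$. Since every $\alpha\in\mathcal{E}_n^n$ is a product of the $v_i$'s, condition (iii) of Proposition~\ref{unique} gives $\alpha f_n=f_n\alpha=f_n$, and therefore $\alpha f_n\beta=f_n$ for all $\alpha,\beta\in\mathcal{E}_n^n$. Expanding $f_n=\sum_{i=1}^{(2n-1)!!}x_i\epsilon_i$ and comparing coefficients in this identity should then force $x$ and $y$ to carry the same scalar.

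To make the comparison rigorous, I would first verify that the left--right multiplication $\epsilon_i\mapsto\alpha\epsilon_i\beta$ is a permutation of the standard basis. The elements of $\mathcal{E}_n^n$ are pure permutation diagrams (no caps or cups), so stacking them above or below a diagram $\epsilon_i$ only rearranges the attachment points of its caps, cups, and through strands without creating any closed loops; in particular no factor of $d$ appears, and the number of through strands is preserved by Lemma~\ref{kts}. Since $(\mathcal{E}_n^n,\cdot)$ is a group, $\alpha$ and $\beta$ are invertible, so the map is bijective; I would write $\alpha\epsilon_i\beta=\epsilon_{\sigma(i)}$ for the induced permutation $\sigma$ of the index set.

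Substituting this into $\alpha f_n\beta=f_n$ yields $\sum_i x_i\epsilon_{\sigma(i)}=\sum_i x_i\epsilon_i$, and linear independence of the basis gives $x_i=x_{\sigma(i)}$ for every $i$. Applying this to $x=\epsilon_p$ and $y=\epsilon_q=\alpha\epsilon_p\beta$ produced by Lemma~\ref{kel}, so that $q=\sigma(p)$, delivers the desired equality $x_p=x_q$.

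The main delicate step will be the diagrammatic verification that $\alpha\epsilon_i\beta$ equals a single basis element with coefficient exactly $1$, i.e., that no closed loops and hence no powers of $d$ are introduced when $\alpha$ and $\beta$ carry no cups or caps. This is a picture-level check rather than a purely algebraic one, but it is the only nonroutine ingredient; once it is in hand, the remainder of the argument is linear-algebraic bookkeeping.
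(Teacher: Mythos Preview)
Your proposal is correct and follows essentially the same approach as the paper: invoke Lemma~\ref{kel} to obtain $\alpha,\beta\in\mathcal{E}_n^n$ with $y=\alpha x\beta$, use Proposition~\ref{unique}(iii) to get $\alpha f_n\beta=f_n$, and then observe that $\epsilon_i\mapsto\alpha\epsilon_i\beta$ permutes the basis so that the coefficients of $x$ and $y$ must agree. The paper's version is terser---it argues injectivity of $z\mapsto\alpha z\beta$ via invertibility of $\alpha,\beta$ without explicitly addressing the ``no closed loops'' point you flag---so your write-up is, if anything, a bit more careful on that diagrammatic check.
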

\begin{proof}
By Lemma \ref{kel}, there exists elements $\alpha, \beta\in \mathcal{E}_n^n$ such that $y=\alpha x\beta$. Then $\alpha f_n\beta=f_n$ by Proposition \ref{unique} $(iii)$. It is obvious that there does not exist another $k$-element $z$ of $VTL_{n}(d)$ such that $\alpha z \beta=y$, if does, then we can get $x=z$ since there exists inverse element of $\alpha$ (resp. $\beta$). Therefore, the coefficients of $x,y$ are equivalent.
\end{proof}

We denote $f_n=\sum_{l=0}^{[\frac{n}{2}]}\coeff{n}{}([n-2l]_n)[n-2l]_n$.
Then $\coeff{n}{}([n]_n)=\frac{1}{n!}$ by Proposition \ref{unique}.

\begin{figure}[!htbp]
\centering
\includegraphics[width=1.5in]{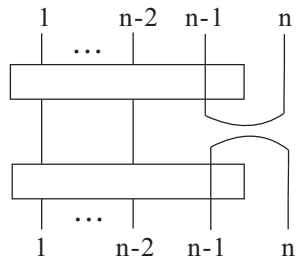}
\caption{The term $e_{n-1}$ in $f_{n}$. }\label{F:projectorem}
\end{figure}

\begin{proposition}\label{P:coefm-1}
The coefficient of $e_{n-1}$ in $f_n$ is equal to $-\frac{2}{n!(d+2n-4)}$, i.e., $\coeff{n}{}([n-2]_n)=-\frac{2}{n!(d+2n-4)}.$
\end{proposition}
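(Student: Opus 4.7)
The plan is to use the defining identity $f_n e_{n-1} = 0$ (property $(3_n)$ of Lemma~\ref{new}) together with Proposition~\ref{coef} to reduce the problem to a single linear equation in the unknown $c := \coeff{n}{}(e_{n-1})$. By Proposition~\ref{coef}, every $(n-2)$-element of $VTL_n(d)$ has the same coefficient in $f_n$, so
\[
f_n \;=\; \tfrac{1}{n!}\,[n]_n \;+\; c\,[n-2]_n \;+\; R,
\]
where $R$ is a linear combination of elements with at most $n-4$ through strands. Extracting the coefficient of the basis diagram $e_{n-1}$ from the identity $f_n e_{n-1} = 0$ will yield the equation we need.

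Three classes of terms contribute to the coefficient of $e_{n-1}$ in $f_n e_{n-1}$. First, for a permutation $\epsilon \in \mathcal{E}_n^n$, the product $\epsilon\cdot e_{n-1}$ is a single diagram, and it equals $e_{n-1}$ precisely when $\epsilon$ fixes $1,\ldots,n-2$ pointwise and stabilises $\{n-1,n\}$ as a set; thus $\epsilon \in \{1_n, v_{n-1}\}$, contributing $\tfrac{2}{n!}$ in total. Second, I would analyse $x e_{n-1}$ for each $(n-2)$-element $x$ diagrammatically, by cases on how the cap of $x$ interacts with the cup of $e_{n-1}$ at positions $(n-1,n)$. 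The only $x$'s for which $x e_{n-1}$ is a nonzero scalar multiple of $e_{n-1}$ are: the element $e_{n-1}$ itself, for which $e_{n-1}\cdot e_{n-1} = d\,e_{n-1}$ via one closed loop; and, for each $b \in \{1,\ldots,n-2\}$, two diagrams---one with cap $(n-1,b)$ and one with cap $(n,b)$, each with cup $(n-1,n)$ and the unique through-strand routing that makes the composite equal $e_{n-1}$ (scalar $1$). This gives a total coefficient $d + 2(n-2)$ in $[n-2]_n \cdot e_{n-1}$, contributing $c(d+2n-4)$. Third, the tail $R$ contributes nothing: right-multiplication by $e_{n-1}$ cannot increase the number of through strands, so any element of $R$ yields a composite with strictly fewer than $n-2$ through strands, which cannot equal $e_{n-1}$.

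Equating the total to zero gives $\tfrac{2}{n!} + c(d+2n-4) = 0$, hence $c = -\tfrac{2}{n!(d+2n-4)}$, as claimed. The principal obstacle is the diagrammatic enumeration in the second class: one must verify exhaustiveness by showing that every other $(n-2)$-element $y$ yields either a composite with strictly fewer through strands (this occurs when the cap of $y$ is disjoint from $\{n-1,n\}$, producing two separate caps in $y e_{n-1}$) or a single diagram distinct from $e_{n-1}$ (when the cup of $y$ does not lie at $(n-1,n)$, or when the through-strand routing fails to match). The resulting count of $2n-3$ diagrams and the closed-loop factor $d$ from $e_{n-1}^2 = d\,e_{n-1}$ combine precisely to produce the denominator $d+2n-4$.
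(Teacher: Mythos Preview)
Your argument is correct and takes a genuinely different route from the paper's. The paper works directly from the recursion $f_n = x_{n-1}f_{n-1} + y_{n-1}f_{n-1}e_{n-1}f_{n-1} + z_{n-1}f_{n-1}v_{n-1}f_{n-1}$, observing that $e_{n-1}$ can only arise from the middle summand; it then counts the pairs $(\alpha,\beta)$ of permutation diagrams in $f_{n-1}$ with $\alpha e_{n-1}\beta = e_{n-1}$ (there are $(n-2)!$ of them, each with coefficient $\tfrac{1}{(n-1)!}$ on both sides), so the answer is $(n-2)!\cdot\tfrac{1}{((n-1)!)^2}\cdot y_{n-1}$. You instead exploit the axiomatic property $f_n e_{n-1}=0$ from Lemma~\ref{new}$(3_n)$ together with Proposition~\ref{coef}, reducing the question to a single linear equation in the unknown coefficient. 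Your approach has the virtue of making the factor $d+2n-4$ appear transparently as the count of $(n-2)$-diagrams $x$ (weighted by the loop factor) with $x e_{n-1}$ proportional to $e_{n-1}$; the paper's approach has the virtue of being a direct computation that does not appeal to the annihilation property, and it is the template for the general recursion in Proposition~\ref{P:recursiveformula}. Your diagrammatic enumeration is accurate under the paper's convention (cup on the top edge, cap on the bottom edge): the cup of $x$ must sit at $(n-1,n)$, and the interaction at the interface is between the \emph{cap} of $x$ and the \emph{cup} of $e_{n-1}$, exactly as you describe, yielding the $1+2(n-2)=2n-3$ contributing diagrams.
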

\begin{proof}
Based on the recursive formula of $f_n$ in Figure \ref{F:fnfn1}, $e_{n-1}$ only comes from
$f_{n-1}e_{n-1}f_{n-1}$, where there exists two elements $\alpha$ and $\beta$ of $VTL_{n-1}(d)$ such that $\alpha\cdot1_n$ and $\beta\cdot1_n$ belong to $\mathcal{E}_{n}^{n}$ and the last one strand in both $\alpha$ and $\beta$ are vertical strand, $\alpha e_{n-1}\beta=e_{n-1}$ and $\alpha\beta=1_{n-1}$. See Figure \ref{F:projectorem}.
Since there are $(n-2)!$ choices for such $\alpha$ and $\beta$ and the coefficients of $\alpha$ and $\beta$ both are $\frac{1}{(n-1)!}$ in $f_{n-1}$,
then we can obtain that the coefficient of $e_{n-1}$ in $f_n$ is equal to $(n-2)!\frac{1}{(n-1)!^2}y_{n-1}$, i.e., $\coeff{n}{}([n-2]_n)=-\frac{2}{n!(d+2n-4)}.$
\end{proof}

\section{An explicit formula for the projectors}
The aim of this section is to present a method for calculating the coefficients for each diagram appearing in the projector $f_n$. The starting point will be the simplified recurrence formula   Eq.(\ref{E:simplifiedformula}), allowing us to calculate $f_n$ in terms of $f_{n-1}$ and given $f_{n}^{\mathcal{K}}$.

We consider a special type of $k$-element $e_{k+1}e_{k+3}\cdots e_{n-1}$ ($k<n$) in $VTL_n(d)$, which is the diagram with first $k$ vertical strands and $\frac{n-k}{2}$ (recall that $n-k=2l$) cap-cup pairs.
We call as \textit{canonical $k$-element} of $VTL_n(d)$ and denote by $\mathcal{CE}_{n}^{k}$, that is, $\mathcal{CE}_{n}^{k}=e_{k+1}e_{k+3}\cdots e_{n-1}$.
Note that there are cups and caps on sites $k+1$, $k+3$,...,$n-1$ in $\mathcal{CE}_{n}^{k}$.
This is illustrated for $n=5,6$ in Figure \ref{F:canonicalelement}.
In this section, we first attempt to get the coefficient of $\mathcal{CE}_{n}^{k}$ ($=\coeff{n}{}([k]_n)$) such that we can obtain an explicit formula for projector $f_n$.

\begin{figure}[!htbp]
\centering
\includegraphics[width=4.5in]{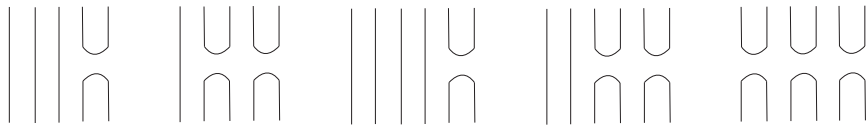}
  \caption{The canonical elements of $VTL_{5}$ and $VTL_{6}$.}\label{F:canonicalelement}
\end{figure}

To get the canonical $k$-element $\mathcal{CE}_{n+1}^{k}$ of $VTL_{n+1}(d)$, we have to find out which elements of $VTL_{n}(d)$ multiply the elements of $A_{n+1}$ become $\mathcal{CE}_{n+1}^{k}$.
In particular, when $k=n+1$, $\mathcal{CE}_{n+1}^{n+1}$ only comes from $1_n\cdot 1_{n+1}$.

In the following, we assume that $n+1-2[\frac{n+1}{2}]\leq k\leq n-1$ in the canonical $k$-element $\mathcal{CE}_{n+1}^{k}$.
For each $i\in \{k+1,k+3,...,n\}$, let $\mathcal{U}^{i}=\{U_{n}U_{n-1}\cdots U_{i+1}e_{i}|U_{j}=e_j ~\text{or}~ v_j, i+1\leq j\leq n\}$ for $i<n$ and $\mathcal{U}^{n}=\{e_{n}\}$ for $i=n$.
Note that the set $\mathcal{U}^{i}\subset A_{n+1}$.
Then each element of $\mathcal{U}^{i}$ is $(n-2)$-element.

\begin{lemma}\label{L:observations}
Let $\mathcal{CE}_{n+1}^{k}=XY$, where $X\in VTL_{n}(d)$ and $Y\in A_{n+1}$.
Then we have that
\begin{itemize}
  \item [(1)] In $Y$, there are a cap on some site $i$, $i\in\{k+1,k+3,...,n-2,n\}$, and there are $i-1$ vertical strands between first $i-1$ point pairs, that is, $Y\in \mathcal{U}^{i}$.
  \item [(2)] $X$ is a $(k+1)$-element, meanwhile, there are cups on sites $k+1$, $k+3$,...,$n-2$ and caps on sites $k+1$, $k+3$,...,$i-2$ in $X$ ($i\geq k+3$).
\end{itemize}
\end{lemma}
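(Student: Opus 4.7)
The plan is to exploit the very rigid shape of $\mathcal{CE}_{n+1}^{k} = e_{k+1} e_{k+3} \cdots e_{n}$, which consists of $k$ through strands on the first $k$ sites followed by consecutive cup--cap pairs at sites $(k+1,k+2), (k+3,k+4), \ldots, (n, n+1)$, and to translate this rigid shape into constraints on the two factors $X$ and $Y$.

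First I would dispose of the trivial possibility $Y = 1_{n+1}$. If $Y = 1_{n+1}$, then $XY$ would equal $X$ extended by a vertical strand at position $n+1$; but $\mathcal{CE}_{n+1}^{k}$ contains the factor $e_n$, so position $n+1$ is joined to position $n$ rather than being on a vertical strand. Hence $Y$ must be a non-trivial word $Y = U_{n} U_{n-1} \cdots U_{i}$ with $U_{j} \in \{e_{j}, v_{j}\}$ for some $1 \leq i \leq n$. To prove part (1), I would trace the strand at position $n+1$ through the composed diagram. Because $X$ is embedded in $VTL_{n+1}(d)$ with a vertical $(n+1)$-th strand, the two boundary points at position $n+1$ of $X$ are identified by this strand, which acts as a \emph{bridge} through $X$. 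Tracing the boundary point at position $n+1$ of $\mathcal{CE}_{n+1}^{k}$ through $Y$ and across this bridge forces the bottom-most operator $U_i$ to be $e_i$: an initial $v_i$ would either scatter the strand to a non-adjacent site or create connections incompatible with the canonical cup--cap pattern of $\mathcal{CE}_{n+1}^{k}$. Moreover, the cap at site $i$ supplied by this $e_i$ must coincide with one of the existing caps of $\mathcal{CE}_{n+1}^{k}$, which occur only at sites $k+1, k+3, \ldots, n$, pinning $i$ down to this set and yielding $Y \in \mathcal{U}^{i}$.

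For part (2), the argument proceeds by counting through strands and by boundary bookkeeping. Since $Y \in \mathcal{U}^{i}$ has exactly one cap on bottom (coming from the factor $e_i$) and one cup on top (formed by the chain of upper generators $U_{i+1},\ldots,U_{n}$), it is an $(n-1)$-element of $VTL_{n+1}(d)$. Matching endpoints through the composition, combined with the fact that $XY$ has exactly $k$ through strands, forces $X$ to be a $(k+1)$-element of $VTL_n(d)$. The cups on top of $\mathcal{CE}_{n+1}^{k}$ at sites $k+1, k+3, \ldots, n-2$ lie entirely within the footprint of $X$ and never involve position $n+1$, so they must be cups of $X$; the top-most cup at site $n$ is instead realized by combining $Y$'s upper generators with $X$'s bridge at position $n+1$. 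A symmetric analysis on the bottom boundary, together with the observation that $Y$'s own cap sits at site $i$, identifies the caps of $X$ as precisely those at sites $k+1, k+3, \ldots, i-2$, an empty list exactly when $i = k+1$.

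The main obstacle I anticipate is justifying rigorously that every admissible choice of $U_{i+1}, \ldots, U_n$ in $Y$ is compatible with reproducing the canonical cup--cap pattern of $\mathcal{CE}_{n+1}^{k}$, and conversely that no word outside $\mathcal{U}^{i}$ can yield it. I expect this to require a careful strand-by-strand reconciliation of the crossings $v_j$ and cap--cups $e_j$ in $Y$ against the fixed structure of $\mathcal{CE}_{n+1}^{k}$, handled either by induction on the length $n-i+1$ of $Y$ or by repeated use of the simple Temperley--Lieb relations together with the virtual relations listed in (\ref{vrelation}).
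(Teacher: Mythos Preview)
Your overall strategy matches the paper's, and for part~(2) you actually supply more detail than the paper, which simply refers to a figure. However, your argument for part~(1) diverges from the paper's and contains a gap. Tracing the strand at position $n{+}1$ through $Y$ tells you about $U_n$, not about the bottom-most letter $U_i$: position $n{+}1$ passes vertically through $U_i,\ldots,U_{n-1}$ and is first touched only by $U_n$. Thus your claim that this trace ``forces the bottom-most operator $U_i$ to be $e_i$'' is not justified as stated. Relatedly, disposing of $Y=1_{n+1}$ alone is not enough; you must also exclude the case where $Y$ is a nontrivial product consisting solely of $v_j$'s.

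The paper handles both points differently and more directly. First, if $Y$ contains no $e_j$ at all then $Y$ is invertible, and $X\cdot 1_{n+1}=\mathcal{CE}_{n+1}^{k}\,Y^{-1}$; but the right-hand side retains the top cup of $\mathcal{CE}_{n+1}^{k}$ at site $n$, impossible for an element of $VTL_n(d)$ extended by a vertical $(n{+}1)$-st strand. Second, once $Y$ does have a bottom cap, that cap survives unchanged in $XY$ (since $X$ sits above $Y$); as every bottom cap of $\mathcal{CE}_{n+1}^{k}$ joins adjacent points, the bottom cap of $Y$ must lie at $(i,i{+}1)$, and for a word $U_n\cdots U_i\in A_{n+1}$ this happens precisely when $U_i=e_i$. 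This ``bottom-cap'' observation is the cleaner replacement for your position-$(n{+}1)$ trace; once you have it, your identification of $i\in\{k{+}1,k{+}3,\ldots,n\}$ and your bookkeeping for part~(2) go through.
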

\begin{proof}
For conclusion $(1)$, first, we claim that there must have one cap in $Y$.
If not, then $Y$ does not contains any $e_i$ and $Y$ is a $n$-element. Then we get that $X 1_{n+1}=\mathcal{CE}_{n+1}^{k}Y^{-1}$. And $Y^{-1}$ will permute the bottom point site of $\mathcal{CE}_{n+1}^{k}$. Then there must be a cup on site $n$ in $X 1_{n+1}$, which is in contradiction with $X\in VTL_{n}(d)$.
Second, we claim that the cap connects continuous two points $i$ and $i+1$ in $Y$ ($i\leq n$).
If not, we assume the two points are $i$ and $j$ such that $j-i>1$.
For any $X$, there is still a cap connecting $i$ and $j$ such that $XY$ is impossible to be $\mathcal{CE}_{n+1}^{k}$.
Via the above two claims, it is obvious that there are $i-1$ vertical strands between first $i-1$ point pairs by the definition of the set $A_{n+1}$, that is, $Y\in \mathcal{U}^{i}$.

According conclusion (1), we can get conclusion (2) easily as shown in Figure \ref{F:observation}.
\end{proof}

\begin{figure}[!htbp]
\centering
\includegraphics[width=4.5in]{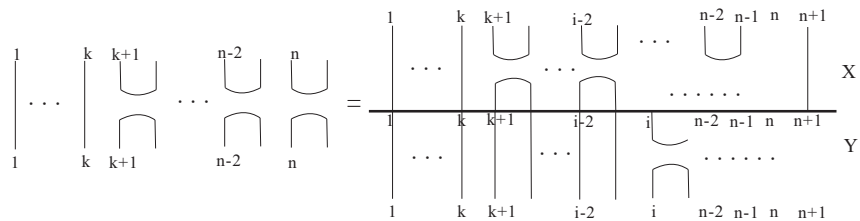}
  \caption{The illustrations in Lemma \ref{L:observations}.}\label{F:observation}
\end{figure}

Next we can get the following result.

\begin{lemma}\label{L:existandunique}
For each $Y\in \mathcal{U}^{i}$ ($i=k+1,k+3,...,n-2,n$), there exists unique $(k+1)$-element $X\in VTL_n(d)$ up to isotopy such that $\mathcal{CE}_{n+1}^{k}=XY$.
\end{lemma}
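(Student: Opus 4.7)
The plan is to establish existence by an explicit construction of $X$ and to establish uniqueness by showing that this diagram is forced by the equation $XY=\mathcal{CE}_{n+1}^{k}$.

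The starting point is Lemma \ref{L:observations}, which already constrains $X$ to be a $(k+1)$-element whose cups (on the top edge) sit at sites $k+1,k+3,\ldots,n-2$ and whose caps (on the bottom edge) sit at sites $k+1,k+3,\ldots,i-2$ (the latter collection being empty when $i=k+1$). Consequently the free top endpoints of $X$ are at sites $\{1,\ldots,k,n\}$ and the free bottom endpoints of $X$ lie in $\{1,\ldots,k\}\cup\{i,i+1,\ldots,n\}$. Specifying $X$ up to isotopy thus reduces to specifying a matching of these free top and bottom endpoints by through strands.

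To pin down this matching I would trace each strand of $\mathcal{CE}_{n+1}^{k}$ through the composition $(X\otimes 1_{1})Y$. For each $j\in\{1,\ldots,k\}$, the vertical strand of $\mathcal{CE}_{n+1}^{k}$ at site $j$ must enter $X$ at top site $j$, traverse $X$ as a through strand (it cannot plug into a cup, since cups start at site $k+1$), emerge at some bottom free site of $X$, pass into $Y$, and exit at bottom site $j$ of $Y$. Since $Y$ is given, reading the unique strand of $Y$ ending at bottom site $j$ identifies the bottom free site of $X$ that must serve as the other endpoint. The remaining top free site $n$ of $X$ is matched by the analogous trace of the arc of $\mathcal{CE}_{n+1}^{k}$ realizing the cup at top site $n$: that arc leaves $X$ at top site $n$, runs through $X$ as a through strand, and must end at the free bottom site of $X$ whose image under $Y$ pairs with the identity strand contributed by $\otimes 1_{1}$. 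The residual free bottom sites of $X$ are precisely those whose image under $Y$ produces the caps of $\mathcal{CE}_{n+1}^{k}$ at bottom sites $k+1,k+3,\ldots,n$.

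For existence I would then take $X$ to be the diagram with the prescribed cup/cap positions and the matching determined above; its composition with $Y$ reproduces $\mathcal{CE}_{n+1}^{k}$ by construction. For uniqueness, any candidate $X$ must share the cup/cap positions by Lemma \ref{L:observations}, and the matching of free endpoints is forced by the tracing just described, so $X$ is determined up to isotopy. The main technical obstacle is precisely this tracing: each factor $U_{j}=v_{j}$ in $Y$ acts as an adjacent transposition on strand labels, while each $U_{j}=e_{j}$ for $j>i$ introduces a local cap/cup pair which must combine with the cap/cup of $e_{i}$ via the relations of Eq. (\ref{vrelation}) to produce the single cap and single cup that $Y$ possesses as a diagram in $A_{n+1}$. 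Verifying that the cumulative effect on free endpoint labels is a well-defined bijection, so that the matching described above is consistent with exactly one diagram, is where the real work lies.
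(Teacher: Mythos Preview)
Your approach is genuinely different from the paper's. The paper proves existence by downward induction on $i$: for $i=n$ it takes $X=e_{k+1}e_{k+3}\cdots e_{n-2}$, and for the step from $i$ to $i-2$ it shows, by a case analysis on $U_{i}\in\{e_{i},v_{i}\}$ and $U_{i-1}\in\{e_{i-1},v_{i-1}\}$, how to modify the $X^{*}$ that works for $Ze_{i}\in\mathcal{U}^{i}$ into an $X$ that works for $ZU_{i}U_{i-1}e_{i-2}\in\mathcal{U}^{i-2}$, using the relations in Eq.~(\ref{vrelation}) directly. Uniqueness is then asserted to follow from the structure of $\mathcal{U}^{i}$ and of $\mathcal{CE}_{n+1}^{k}$. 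Your global strand-tracing argument is more conceptual and avoids the case split, but the trade-off is that the algebraic verification is replaced by a diagrammatic one that you explicitly do not carry out: you flag that checking the tracing yields a consistent matching ``is where the real work lies,'' and then stop.

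There is also a slip in your setup. Lemma~\ref{L:observations} pins down all $(n-k-1)/2$ cups of $X$, but only $(i-k-1)/2$ of its caps (those at sites $k+1,k+3,\ldots,i-2$); when $i<n$ there remain $(n-i)/2$ caps whose positions are not yet determined. So it is not correct to say that specifying $X$ ``reduces to specifying a matching of these free top and bottom endpoints by through strands'': you must also decide which pairs among the residual bottom sites $\{i,\ldots,n\}$ are capped together. You partially recover this later with your ``residual free bottom sites'' remark, but the argument that tracing through $Y$ produces a well-defined pairing on those sites, and that no closed loop (hence no spurious factor of $d$) is created in the composition, is exactly the content you have deferred. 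The paper's inductive construction sidesteps this entirely: each step is a local algebraic rewrite, so the global connectivity of $Y$ never needs to be analyzed.
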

\begin{proof}
First, we use induction way to prove the existence.
In fact, the existence ensures the uniqueness based on the characterisation of the set $\mathcal{U}^{i}\subset A_{n+1}$ and $\mathcal{CE}_{n+1}^{k}$ .
The Lemma is clearly true when $i=n$, that is, we take $(k+1)$-element $X=e_{k+1}e_{k+3}\cdots e_{n-2}$ in $VTL_n(d)$.
Inductively suppose it is true for a given $k+3\leq i\leq n-2$.
Then for $i-2$, let $\mathcal{U}^{i-2}=\{U_{n}U_{n-1}\cdots U_{i}U_{i-1}e_{i-2}|U_{j}=e_j ~\text{or}~ v_j, i-1\leq j\leq n\}$.

\textbf{case 1.} $U_i=e_i$.

Note that for any $Y\in \mathcal{U}^{i-2}$, then $Y=(Ze_{i})U_{i-1}e_{i-2}$ where $Ze_i\in \mathcal{U}^{i}$.
By inductive assumption, there exists $(k+1)$-element $X^{*}\in VTL_n(d)$ such that $\mathcal{CE}_{n+1}^{k}=X^{*}(Ze_{i})$.
Then for $(Ze_{i})U_{i-1}e_{i-2}$, we take $(k+1)$-element $X=X^{*}$. Then
\begin{eqnarray*}
X^{*}(Ze_{i})U_{i-1}e_{i-2} &=& e_{k+1}e_{k+3}\cdots e_{i-2}e_{i}\cdots e_{n}U_{i-1}e_{i-2}\\
&=& e_{k+1}e_{k+3}\cdots (e_{i-2}e_{i}U_{i-1}e_{i-2})e_{i+2}\cdots e_{n}\\
&=&e_{k+1}e_{k+3}\cdots (e_{i}e_{i-2}U_{i-1}e_{i-2})e_{i+2}\cdots e_{n}\\
&=&e_{k+1}e_{k+3}\cdots (e_{i}(e_{i-2}U_{i-1}e_{i-2}))e_{i+2}\cdots e_{n}\\
\end{eqnarray*}
By Eq. (1), we have $e_{i-2}e_{i-1}e_{i-2}=e_{i-2}$ and $e_{i-2}v_{i-1}e_{i-2}=e_{i-2}$.
Then we have
\begin{eqnarray*}
X^{*}(Ze_{i})U_{i-1}e_{i-2}&=&e_{k+1}e_{k+3}\cdots (e_{i}e_{i-2})\cdots e_{n}\\
&=&e_{k+1}e_{k+3}\cdots e_{i-2}e_{i}\cdots e_{n}
\end{eqnarray*}

\textbf{case 2.} $U_i=v_i$.

Note that $Y=Wv_{i}U_{i-1}e_{i-2}$ where $W\in \{U_{n}U_{n-1}\cdots U_{i+1}|U_{j}=e_j ~\text{or}~ v_j, i+1\leq j\leq n\}$.

By inductive assumption, there exists $(k+1)$-element $X^{*}\in VTL_{n}(d)$ such that $\mathcal{CE}_{n+1}^{k}=X^{*}We_i$.

\textbf{subcase 2.1.} When $Y=Wv_{i}e_{i-1}e_{i-2}$, we take $(k+1)$-element $X=X^{*}v_{i-1}$.

Note that $X^{*}v_{i-1}Wv_{i}e_{i-1}e_{i-2}=X^{*}Wv_{i-1}v_{i}e_{i-1}e_{i-2}$ since the subscripts of $e$ and $v$ in $W$ are both at least $i+1$.
\begin{eqnarray*}
X^{*}Wv_{i-1}v_{i}e_{i-1}e_{i-2} &=& X^{*}W(v_{i-1}v_{i}e_{i-1})e_{i-2}\\
&=& X^{*}W(e_{i}e_{i-1})e_{i-2}\\
&=& e_{k+1}e_{k+3}\cdots e_{n}e_{i-1}e_{i-2}\\
&=& e_{k+1}e_{k+3}\cdots e_{i-4}e_{i}\cdots e_{n}e_{i-2}e_{i-1}e_{i-2}\\
&=& e_{k+1}e_{k+3}\cdots e_{i-4}e_{i}\cdots e_{n}e_{i-2}\\
&=& e_{k+1}e_{k+3}\cdots e_{i-4}e_{i-2}e_{i}\cdots e_{n}
\end{eqnarray*}

\textbf{subcase 2.2.}  When $Y=Wv_{i}v_{i-1}e_{i-2}$, we take $(k+1)$-element $X=X^{*}v_{i-1}v_{i-2}$.

Note that $X^{*}v_{i-1}v_{i-2}Wv_{i}v_{i-1}e_{i-2}=X^{*}Wv_{i-1}v_{i-2}v_{i}v_{i-1}e_{i-2}$ since the subscripts of $e$ and $v$ in $W$ are both at least $i+1$.
\begin{eqnarray*}
X^{*}Wv_{i-1}v_{i-2}v_{i}v_{i-1}e_{i-2} &=& X^{*}Wv_{i-1}v_{i}(v_{i-2}v_{i-1}e_{i-2})\\
&=& X^{*}Wv_{i-1}v_{i}(e_{i-1}e_{i-2})\\
&=& X^{*}W((v_{i-1}v_{i}e_{i-1})e_{i-2})\\
&=& X^{*}W((e_{i}e_{i-1})e_{i-2})\\
&=& e_{k+1}e_{k+3}\cdots e_{n}\ \ (using \ \ \textbf{subcase 2.1})
\end{eqnarray*}
\end{proof}

Next we can obtain the recursive formula for the coefficients of $\mathcal{CE}_{n+1}^{k}$ and $\mathcal{CE}_{n}^{k+1}$.

\begin{proposition}\label{P:recursiveformula}
The coefficient of $\mathcal{CE}_{n}^{k}$ in $f_{n}$ satisfies the following recursive formula:
\begin{equation}
  \coeff{n}{}(\mathcal{CE}_{n}^{k})=[\frac{n-k}{2}]\cdot\frac{-2}{n(d+2n-4)}\cdot\coeff{n-1}{}{([k+1]_{n-1})}.
\end{equation}
\end{proposition}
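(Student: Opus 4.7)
The plan is to use the simplified recurrence $f_n = f_{n-1} f_n^{\mathcal{K}}$ from Lemma \ref{L:fmK}, expand both factors in natural bases, and collect all contributions to $\mathcal{CE}_n^k$. Every such contribution arises from a pair $(X,Y)$ with $X$ a basis element of $VTL_{n-1}$ (embedded in $VTL_n$ as $X\cdot 1_n$) and $Y\in A_n$ a basis element of $\mathcal{K}_n$, subject to $XY=\mathcal{CE}_n^k$. Lemmas \ref{L:observations} and \ref{L:existandunique}, applied with $n+1$ replaced by $n$, classify such pairs: $Y$ must lie in $\mathcal{U}^i$ for some $i\in\{k+1,k+3,\ldots,n-1\}$, and for each $Y\in\mathcal{U}^i$ the matching $X$ is a uniquely determined $(k+1)$-element of $VTL_{n-1}$. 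Proposition \ref{coef} then says that every such $X$ enters $f_{n-1}$ with the same coefficient $\coeff{n-1}{}([k+1]_{n-1})$, so this factor pulls out of the whole sum.

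What remains is to evaluate $\sum_{Y\in\mathcal{U}^i}(\text{coefficient of }Y\text{ in }f_n^{\mathcal{K}})$ and to sum over $i$. Unwinding the recursion defining $f_j^{\mathcal{K}}$ step by step, the element $Y=U_{n-1}U_{n-2}\cdots U_{i+1}e_i$ is produced with coefficient $\bigl(\prod_{j=i+1}^{n-1}c(U_j)\bigr)\, y_i\, x_{i-1}$, where $c(e_j)=y_j$ and $c(v_j)=z_j$; the trailing $y_i$ is picked up from the $y_i e_i f_i^{\mathcal{K}}$ summand of $f_{i+1}^{\mathcal{K}}$, and the factor $x_{i-1}$ from the identity summand of $f_i^{\mathcal{K}}$. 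Summing over the $2^{n-1-i}$ choices of $(U_{n-1},\ldots,U_{i+1})$ collapses this to
\begin{equation*}
\sum_{Y\in\mathcal{U}^i}(\text{coefficient}) \;=\; y_i x_{i-1}\prod_{j=i+1}^{n-1}(y_j+z_j).
\end{equation*}

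The main computational hurdle is then to verify that this expression is independent of $i$ and equals $\tfrac{-2}{n(d+2n-4)}$. This is a telescoping identity: using $y_j+z_j=\tfrac{j(d+2j-4)}{(j+1)(d+2j-2)}$, the factors $\tfrac{j}{j+1}$ telescope to $\tfrac{i+1}{n}$ and the factors $\tfrac{d+2j-4}{d+2j-2}$ telescope to $\tfrac{d+2i-2}{d+2n-4}$, so the product equals $\tfrac{(i+1)(d+2i-2)}{n(d+2n-4)}$; multiplying by $y_i x_{i-1}=-\tfrac{2}{(i+1)(d+2i-2)}$ yields exactly $\tfrac{-2}{n(d+2n-4)}$ for every $i$. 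Summing over the $\tfrac{n-k}{2}=[\tfrac{n-k}{2}]$ admissible values of $i$ then delivers the claimed formula. Apart from this telescoping step, the argument is essentially bookkeeping once Lemmas \ref{L:observations}, \ref{L:existandunique} and Proposition \ref{coef} are in hand; the delicate point is making sure that the uniqueness statement in Lemma \ref{L:existandunique} is invoked correctly so that no pair $(X,Y)$ is double-counted.
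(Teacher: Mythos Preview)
Your proposal is correct and follows essentially the same approach as the paper. The only cosmetic difference is in evaluating $\sum_{Y\in\mathcal{U}^i}\coeff{n}{\mathcal{K}}(Y)$: the paper proves by induction on $i$ (passing from $\mathcal{U}^i$ to $\mathcal{U}^{i-1}$ via the ratio $(1+z_i/y_i)\,y_{i-1}\,x_{i-2}/x_{i-1}$) that this sum is the constant $\tfrac{-2}{n(d+2n-4)}$, whereas you write out the closed-form product $y_ix_{i-1}\prod_{j=i+1}^{n-1}(y_j+z_j)$ and telescope directly---these are the same computation in slightly different dress.
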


\begin{proof}
According to Lemma \ref{L:observations} and \ref{L:existandunique}, we can obtain that $XY$ becomes canonical $k$-element $\mathcal{CE}_{n+1}^{k}$ in $f_{n+1}$ if and only if $X$ is a $(k+1)$-element and $Y\in U^{i}$ for $i\in \{k+1,k+3,...,n\}$.

Thus the coefficient of $\mathcal{CE}_{n+1}^{k}$ is as follows.

\begin{equation}
\coeff{n+1}{}{(\mathcal{CE}_{n+1}^{k})}= \coeff{n}{}{([k+1]_n)}\sum_{i=n+2-2[\frac{n+1}{2}]}^{n} \sum _{Y\in \mathcal{U}^{i}}\coeff{n+1}{\mathcal{K}}(Y)
\label{E:CE}
\end{equation}

Recall that $\mathcal{U}^{i}=\{U_{n}U_{n-1}\cdots U_{i+1}e_{i}|U_{j}=e_j ~\text{or}~ v_j, i+1\leq j\leq n\}$ for $i<n$ and $\mathcal{U}^{n}=\{e_{n}\}$ for $i=n$.
First, we use induction way to prove $\sum _{Y\in \mathcal{U}^{i}}\coeff{n+1}{\mathcal{K}}(Y)=\frac{-2}{(n+1)(d+2n-2)}$, that is, $\sum _{Y\in \mathcal{U}^{i}}\coeff{n+1}{\mathcal{K}}(Y)$ is independent on $i$ ($1\leq i\leq n$).

(1) If $i=n$, then
\begin{equation*}
\sum _{Y\in \mathcal{U}^{n}}\coeff{n+1}{\mathcal{K}}(Y)=\coeff{n+1}{\mathcal{K}}(e_n)=y_nx_{n-1}
=\frac{-2n}{(n+1)(d+2n-2)}\cdot\frac{1}{n}=\frac{-2}{(n+1)(d+2n-2)}
\end{equation*}

(2) Inductively suppose it is true for a given $2\leq i\leq n$.
Then for $i-1$,

\begin{eqnarray*}
\sum _{Y\in \mathcal{U}^{i-1}}\coeff{n+1}{\mathcal{K}}(Y)&=&(\sum_{Y\in \mathcal{U}^{i}}\coeff{n+1}{\mathcal{K}}(Y))(1+\frac{z_{i}}{y_{i}})y_{i-1}\frac{x_{i-2}}{x_{i-1}}\\
&=&\frac{-2}{(n+1)(d+2n-2)}(1+\frac{d+2i-2}{-2})\frac{-2(i-1)}{i(d+2i-4)}\frac{i}{i-1}\\
&=&\frac{-2}{(n+1)(d+2n-2)}
\end{eqnarray*}

Based on $\sum _{Y\in \mathcal{U}^{i}}\coeff{n+1}{\mathcal{K}}(Y)=\frac{-2}{(n+1)(d+2n-2)}$, we can obtain that the left hand of Eq. (\ref{E:CE}) is equal to
$[\frac{n+1-k}{2}]\cdot\frac{-2}{(n+1)(d+2n-2)}\cdot\coeff{n}{}{([k+1]_n)}$.
\end{proof}

\begin{remark}
In \cite{Mor}, Morrison had used similar method to give a recursive formula for the coefficients in Jones-Wenzl projectors.
\end{remark}

\begin{corollary}\label{C:explctfml}
The explicit formula for $f_n$ in $VTL_n(d)$ is as follows:
\begin{equation}
f_n=\sum_{l=0}^{[\frac{n}{2}]}\frac{(-2)^{l}l!}{n!\prod_{i=1}^{l}(d+2n-2-2i)}[n-2l]_n.
\end{equation}
\end{corollary}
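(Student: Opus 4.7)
The plan is to deduce the closed form by iterating the recurrence of Proposition \ref{P:recursiveformula} together with the coefficient-equality statement of Proposition \ref{coef}. Since every $(n-2l)$-element of $VTL_n(d)$ carries the same coefficient in $f_n$, and $[n-2l]_n$ is by definition the sum of all such elements, it suffices to identify, for each $l\in\{0,1,\ldots,[\frac{n}{2}]\}$, the common coefficient $c_{n,l}:=\coeff{n}{}([n-2l]_n)=\coeff{n}{}(\mathcal{CE}_n^{n-2l})$ and then reassemble the sum as $f_n=\sum_{l=0}^{[n/2]}c_{n,l}[n-2l]_n$.

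Setting $k=n-2l$ in Proposition \ref{P:recursiveformula} converts that statement into the one-step recursion
$$c_{n,l}=l\cdot\frac{-2}{n(d+2n-4)}\cdot c_{n-1,l-1},$$
with base case $c_{n,0}=1/n!$, which is supplied by Proposition \ref{unique} as the coefficient of the identity element $1_n$ and hence of every $n$-element. I plan to unfold this recursion $l$ times by induction on $l$, tracking the four kinds of factors that accumulate: the descending integer product $l(l-1)\cdots 1=l!$; the sign/power-of-two contribution $(-2)^{l}$; the ``$n$''-denominators $n(n-1)\cdots(n-l+1)=n!/(n-l)!$; and the ``$d$''-denominators $(d+2n-4)(d+2n-6)\cdots(d+2n-2l-2)=\prod_{i=1}^{l}(d+2n-2-2i)$. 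The recursion terminates at $c_{n-l,0}=1/(n-l)!$, which cancels the $(n-l)!$ produced in the prefactor, leaving
$$c_{n,l}=\frac{(-2)^{l}\,l!}{n!\prod_{i=1}^{l}(d+2n-2-2i)}.$$

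The argument is essentially bookkeeping; the only delicate point is keeping track of how the $d$-denominator shifts under iteration. Because each application of Proposition \ref{P:recursiveformula} replaces $n$ by $n-1$ inside the factor $(d+2n-4)$, the sequence of denominators decreases by $2$ at each step, producing exactly the product $\prod_{i=1}^{l}(d+2n-2-2i)$. As sanity checks I would verify two special cases already recorded in the paper: the $l=0$ term recovers $\coeff{n}{}([n]_n)=1/n!$ from Proposition \ref{unique}, and the $l=1$ term recovers $\coeff{n}{}([n-2]_n)=-2/(n!(d+2n-4))$ from Proposition \ref{P:coefm-1}. Once the formula for $c_{n,l}$ is established, substituting into $f_n=\sum_{l=0}^{[n/2]}c_{n,l}[n-2l]_n$ yields the stated explicit formula.
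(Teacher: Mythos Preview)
Your proposal is correct and follows essentially the same route as the paper: set $k=n-2l$, iterate Proposition~\ref{P:recursiveformula} $l$ times down to the base case $\coeff{n-l}{}([n-l]_{n-l})=1/(n-l)!$, and collect the resulting factors into the closed form. Your explicit bookkeeping of the four factor types and the sanity checks at $l=0,1$ are a nice addition, but the argument is the same one the paper gives.
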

\begin{proof}
Recall that $[k]_n$ denotes the sum over all $k$-elements of $VTL_n(d)$.
$d$ denotes the evaluation value of a simple closed curve.
Let $n-2l=k$. By Proposition \ref{P:recursiveformula}, we have
\begin{eqnarray*}
\coeff{n}{}(\mathcal{CE}_{n}^{n-2l})&=&l\cdot\frac{-2}{n(d+2n-4)}\cdot\coeff{n-1}{}{([n-2l+1]_{n-1})}\\
  &=&l(l-1)\cdot\frac{(-2)^2}{n(n-1)(d+2n-4)(d+2n-6)}\cdot\coeff{n-2}{}{([n-2l+2]_{n-2})}\\
  &=&l!\cdot\frac{(-2)^l}{n(n-1)\cdots (n-l+1)\prod_{i=1}^{l}(d+2n-2-2i)}\cdot\coeff{n-l}{}{([n-l]_{n-l})}\\
  &=&l!\cdot\frac{(-2)^l}{n(n-1)\cdots (n-l+1)\prod_{i=1}^{l}(d+2n-2-2i)}\cdot\frac{1}{(n-l)!}\\
  &=&\frac{(-2)^{l}l!}{n!\prod_{i=1}^{l}(d+2n-2-2i)}
\end{eqnarray*}
\end{proof}

After calculating, we obtain that
\begin{equation*}
  f_2=\frac{1}{2}1_2-\frac{1}{d}e_1+\frac{1}{2}v_1=\frac{1}{2}[2]_2-\frac{1}{d}[0]_2,
\end{equation*}
\begin{equation*}
  f_3=\frac{1}{3!}[3]_3-\frac{2}{3!(d+2)}[1]_3,
\end{equation*}
\begin{equation*}
  f_4=\frac{1}{4!}[4]_4-\frac{2}{4!(d+4)}[2]_4+\frac{1}{3(d+2)(d+4)}[0]_4,
\end{equation*}
\begin{equation*}
  f_5=\frac{1}{5!}[5]_5-\frac{2}{5!(d+6)}[3]_5+\frac{1}{15(d+4)(d+6)}[1]_5,
\end{equation*}
\begin{equation*}
  f_6=\frac{1}{6!}[6]_6-\frac{2}{6!(d+8)}[4]_6+\frac{1}{90(d+6)(d+8)}[2]_6-\frac{1}{15(d+4)(d+6)(d+8)}[0]_6.
\end{equation*}
\begin{equation*}
  f_7=\frac{1}{7!}[7]_7-\frac{2}{7!(d+10)}[5]_7+\frac{1}{630(d+8)(d+10)}[3]_7-\frac{1}{105(d+6)(d+8)(d+10)}[1]_7.
\end{equation*}
\begin{eqnarray*}
  f_8&=&\frac{1}{8!}[8]_8-\frac{2}{8!(d+12)}[6]_8+\frac{1}{630\cdot8(d+10)(d+12)}[4]_8\\
& &-\frac{1}{105\cdot 8(d+8)(d+10)(d+12)}[2]_8+\frac{1}{105(d+6)(d+8)(d+10)(d+12)}[0]_8.
\end{eqnarray*}

\section{Some results on projectors}
The $\textit{Markov trace}$ on the virtual Temperley-Lieb algebra $VTL_{n}(d)$ defined in the following simple way which is similar with the definition of the Markov trace on the Temperley-Lieb algebra $TL_{n}(d)$.
If $D$ is a tangle diagram in the rectangle having $n$ points on each of the top and bottom edges of the rectangle, $D$ represents an element of $VTL_{n}(d).$
Then $tr(D)$ is the bracket polynomial, evaluated at the chosen value of $A$ in $\mathbb{C}$, of the link diagram formed from $D$ by jointing the points on the top edge of the rectangle to those on the bottom by arcs outside the rectangle that introduce no new crossing.
This idea (analogous to the closure of a braid) is illustrated in Figure \ref{F:closure}.

\begin{figure}
\center
\includegraphics{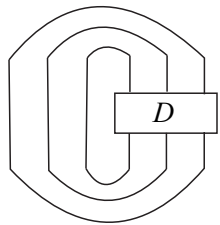}
\caption{The Markov trace of a tangle diagram $D$.}\label{F:closure}
\end{figure}

This clearly induces a well-defined linear map on the algebra $VTL_{n}(d)$, because the relations used to defined $VTL_{n}(d)$ are essentially the formulae that characterize the bracket polynomial. It is then clear that this trace function has the following properties.

\begin{align*}
  tr:& VTL_{n}(d)\rightarrow \mathbb{C},\ for \ any\ x,y\in VTL_{n}(d), \\
  &tr(xy)=tr(yx),\\
   &tr(1_{n})=d^n,\\
   &d\,tr(xe_n)=d\,tr(xv_n)=tr(x1_{n+1})=d\,tr(x)
\end{align*}

The following Lemma calculates the trace of the element $f_n\in VTL_{n}(d)$ constructed above.

\begin{lemma}
$tr(f_{n})=\frac{d^{n-1}(d+2n-2)\prod_{i=1}^{n-1}(d+i-2)}{n!}.$
\end{lemma}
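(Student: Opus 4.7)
I would prove this by induction on $n$, exploiting the recursive construction of $f_n$ together with the three Markov-type identities for $tr$ listed just above the lemma. The base case $n=1$ is immediate: $f_1 = 1_1$ gives $tr(f_1) = d$, which agrees with the formula (the empty product equals $1$ and $d^{0}(d+0) = d$).

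For the inductive step, I would apply $tr$ termwise to the defining recursion
\[
f_n \;=\; x_{n-1} f_{n-1} + y_{n-1} f_{n-1} e_{n-1} f_{n-1} + z_{n-1} f_{n-1} v_{n-1} f_{n-1}.
\]
Using $f_{n-1}^2 = f_{n-1}$ (property $(1_{n-1})$ of Lemma \ref{new}) together with cyclicity $tr(xy)=tr(yx)$, the two nested terms collapse to $tr(f_{n-1} e_{n-1})$ and $tr(f_{n-1} v_{n-1})$ respectively. Now invoke the Markov identities: the first term contributes $tr(f_{n-1}\cdot 1_n) = d\cdot tr(f_{n-1})$, while the other two both reduce to $tr(f_{n-1})$ (with the right-hand traces understood as computed in $VTL_{n-1}(d)$). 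Gathering coefficients yields
\[
tr(f_n) \;=\; (x_{n-1} d + y_{n-1} + z_{n-1})\,tr(f_{n-1}) \;=\; \alpha_{n-1}\,tr(f_{n-1}),
\]
where $\alpha_{n-1}$ is exactly the quantity introduced at the start of Section 2 and already rewritten there as $\frac{(d+n-3)(d+2n-2)}{n(d+2n-4)}$.

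Iterating this recursion down to the base case gives $tr(f_n) = d\prod_{k=1}^{n-1} \alpha_k$, and the claimed closed form then follows by rearranging the product: the factors $(d+2k)/(d+2k-2)$ telescope, the factors $(k+1)^{-1}$ assemble into $1/n!$, and the remaining factors $(d+k-2)$ collect into $\prod_{i=1}^{n-1}(d+i-2)$. The main obstacle is the bookkeeping at each inductive step: one must carefully distinguish whether $f_{n-1}$ is being viewed as an element of $VTL_{n-1}(d)$ or as its image in $VTL_n(d)$ obtained by appending a vertical strand, so that the appropriate Markov identity is applied in the correct ambient algebra. Once this is handled, the remaining work is essentially routine algebraic simplification of a telescoping product.
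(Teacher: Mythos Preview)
Your proof is correct and is essentially the paper's own argument: expand the recursion for $f_n$, use cyclicity of $tr$ together with $f_{n-1}^2=f_{n-1}$ to reduce the two triple products, apply the Markov identities to obtain $tr(f_n)=\alpha_{n-1}\,tr(f_{n-1})$, and then iterate down to $tr(f_1)=d$. The paper presents exactly this chain of equalities (leaving the final telescoping implicit), so your more explicit remarks about the ambient-algebra bookkeeping and the telescoping of the $\alpha_k$ are added detail rather than a different method.
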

\begin{proof}
\begin{eqnarray*}
tr(f_{n})&=&tr(x_{n-1}f_{n-1}+y_{n-1}f_{n-1}e_{n-1}f_{n-1}+z_{n-1}f_{n-1}v_{n-1}f_{n-1})\\
&=&x_{n-1}d tr(f_{n-1})+y_{n-1}tr(f_{n-1}f_{n-1}e_{n-1})+z_{n-1}tr(f_{n-1}f_{n-1}v_{n-1})\\
&=&(x_{n-1}d+y_{n-1}+z_{n-1})tr(f_{n-1})\\
&=&\alpha_{n-1}tr(f_{n-1})\\
&=&\prod_{i=1}^{n-1}\alpha_{i}tr(f_1)\\
&=&\frac{d^{n-1}(d+2n-2)\prod_{i=1}^{n-1}(d+i-2)}{n!}.
\end{eqnarray*}
\end{proof}

By a reduced word $w\in VTL_{n}(d)$ we shall mean a word in the set $\{1_{n}, e_1, e_2, ...,e_{i-1}, v_1,$ $v_2, ... , v_{i-1}\}$ that is not equal to $cw'$ for any $c$ a constant and $w'$ a word of smaller length. Using the relations of $VTL_{n}(d)$ and applying simple combinatorial arguments, we shall show the following result which is similar with the Jones-Wenzl projectors case \cite{AJL}.

\begin{proposition}\label{red}
A reduced word $w\in VTL_{n}(d)$ contains at most one term from the set $\{e_{n-1}$, $v_{n-1}\}$.
\end{proposition}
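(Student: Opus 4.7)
The plan is to prove the proposition by induction on $n$, with the base case $n = 2$ being immediate from the relations $e_1^2 = d\, e_1$, $v_1^2 = 1_n$, and $e_1 v_1 = v_1 e_1 = e_1$, which force any reduced word in $\{e_1, v_1\}$ to have length at most one. For the inductive step, I would argue by contradiction: suppose $w$ is reduced yet contains two or more terms from $\{e_{n-1}, v_{n-1}\}$. Among all such pairs, pick $X, Y \in \{e_{n-1}, v_{n-1}\}$ whose occurrences in $w$ are consecutive in the sense that no $e_{n-1}$ or $v_{n-1}$ appears between them, and write $w = w_1 X w_2 Y w_3$. Then $w_2$ is a word in the subalgebra generated by $\{1_n, e_1, \dots, e_{n-2}, v_1, \dots, v_{n-2}\}$, which is a copy of $VTL_{n-1}(d)$; since $w$ is reduced, so is $w_2$, and the inductive hypothesis yields that $w_2$ contains at most one term from $\{e_{n-2}, v_{n-2}\}$.

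The remainder of the argument is to derive a contradiction by simplifying $X w_2 Y$. First, if $w_2$ contains no $e_{n-2}$ or $v_{n-2}$, then every generator in $w_2$ has index at most $n-3$, and by the $|i-j| \geq 2$ commutation relations, $w_2$ commutes past both $X$ and $Y$, giving $X w_2 Y = w_2 \cdot XY$. Since $XY$ is one of $\{e_{n-1}^2,\, e_{n-1}v_{n-1},\, v_{n-1}e_{n-1},\, v_{n-1}^2\}$, it reduces via Eq. (\ref{vrelation}) to a scalar multiple of a single generator or $1_n$, producing a strictly shorter representation of $w$ and contradicting reducedness. Otherwise $w_2 = a Z b$ with $Z \in \{e_{n-2}, v_{n-2}\}$ and $a, b$ using only generators of index at most $n-3$; commuting $a$ past $X$ and $b$ past $Y$ yields $X w_2 Y = a \cdot (X Z Y) \cdot b$, and the middle $XZY$ falls into one of the eight subcases determined by $X, Z, Y$.

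The core of the proof is then to check that each of these eight subcases collapses. Six of them -- $e_{n-1}e_{n-2}e_{n-1} = e_{n-1}$, $e_{n-1}v_{n-2}e_{n-1} = e_{n-1}$, and the four mixed cases handled by identity $(iii)$, namely $e_{n-1}e_{n-2}v_{n-1} = e_{n-1}v_{n-2}$, $v_{n-1}e_{n-2}e_{n-1} = v_{n-2}e_{n-1}$, $e_{n-1}v_{n-2}v_{n-1} = e_{n-1}e_{n-2}$, and $v_{n-1}v_{n-2}e_{n-1} = e_{n-2}e_{n-1}$ -- strictly shorten the length of $XZY$, immediately contradicting reducedness of $w$. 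The two remaining subcases $v_{n-1}e_{n-2}v_{n-1} = v_{n-2}e_{n-1}v_{n-2}$ and $v_{n-1}v_{n-2}v_{n-1} = v_{n-2}v_{n-1}v_{n-2}$ preserve length, but each strictly reduces the count of occurrences of $\{e_{n-1}, v_{n-1}\}$ by one.

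The main obstacle is precisely these two length-preserving subcases; handling them requires interpreting reducedness lexicographically, first by length and then by the total number of $\{e_{n-1}, v_{n-1}\}$-occurrences. Iterating the transformation produces a sequence of equal-length or strictly shorter words in which the count of top-level generators strictly decreases, and since length and count are bounded below, the process terminates at a word with at most one term from $\{e_{n-1}, v_{n-1}\}$, contradicting the choice of $w$. A careful verification that the newly introduced $e_{n-2}, v_{n-2}$ factors do not spoil the inductive count on $\{e_{n-2}, v_{n-2}\}$-occurrences in the surrounding $w_1, w_3$ completes the argument.
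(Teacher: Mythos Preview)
Your approach is genuinely different from the paper's. The paper's proof is a two-line appeal to results already established: since by Corollary~\ref{C:explctfml} every diagram appears with nonzero coefficient in $f_n$, and since $f_n = f_{n-1} f_n^{\mathcal{K}}$ by Lemma~\ref{L:fmK} with $f_n^{\mathcal{K}}$ supported on $A_n$, every diagram factors as an element of $VTL_{n-1}(d)$ times an element of $A_n$, and the latter contributes at most one $e_{n-1}$ or $v_{n-1}$. Your argument, by contrast, is self-contained, using only the relations~(\ref{vrelation}) and induction on $n$; it would stand independently of the projector machinery and is closer in spirit to the classical Temperley--Lieb normal-form arguments.

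Your handling of the two length-preserving subcases is perceptive, and in fact forced: the proposition as literally stated is not quite true, since $v_{n-1}v_{n-2}v_{n-1}$ is a reduced word (it represents the transposition of strands $n-2$ and $n$, which has no shorter expression) yet contains two copies of $v_{n-1}$. What both your argument and the paper's actually establish is that every element of $VTL_n(d)$ \emph{admits} a reduced expression with at most one term from $\{e_{n-1},v_{n-1}\}$. With that reading, your induction needs a small adjustment: the inductive hypothesis only guarantees that $w_2$ can be \emph{replaced} by an equal-length word with at most one letter from $\{e_{n-2},v_{n-2}\}$, not that $w_2$ itself has this property. Once that substitution is made, your case analysis goes through, and after a length-preserving move the resulting word remains reduced (same length, same element), so iteration is legitimate and terminates as you describe---this is what replaces the ``careful verification'' you flag at the end.
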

\begin{proof}
Based on the explicit formula of $f_n$ in Corollary \ref{C:explctfml} and the simplified recursive formula of $f_n$ in Lemma \ref{L:fmK}, it is obvious that $w$ appears in $f_n$ and $w$ can be expressed as the product of some element of $VTL_{n-1}(d)$ and another element of $A_{n}$.
Then we obtain that a reduced word $w\in VTL_{n}(d)$ contains at most one term from the set $\{e_{n-1}$, $v_{n-1}\}$ since any element of $A_{n}$ contains one term from the set $\{e_{n-1}$, $v_{n-1}\}$ and any element of $VTL_{n-1}(d)$ does not contain neither $e_{n-1}$ nor $v_{n-1}$.
\end{proof}

\section{Acknowledgements}
This work is supported partially by Hu Xiang Gao Ceng Ci Ren Cai Ju Jiao Gong Cheng-Chuang Xin Ren Cai (No. 2019RS1057).
Deng is also supported by Doctor's Funds of Xiangtan University (No. 09KZ$|$KZ08069) and NSFC (No. 12001464).
Jin is also supported by NSFC (No. 11671336) and the Fundamental Research
Funds for the Central Universities  (No. 20720190062).
Kauffman is also supported by the Laboratory of Topology and Dynamics, Novosibirsk State University (contract no. 14.Y26.31.0025 with the Ministry of Education and Science of the Russian Federation).

\end{document}